\title{Generic Stability and Modes of Convergence
}
\author{Karim Khanaki\thanks{Partially supported by IPM grant 1401030117.}\\Arak University of Technology}
\newtheorem{Theorem}{Theorem}[section]
\newtheorem{Proposition}[Theorem]{Proposition}
\newtheorem{Definition}[Theorem]{Definition}
\newtheorem{Remark}[Theorem]{Remark}
\newtheorem{Lemma}[Theorem]{Lemma}
\newtheorem{Corollary}[Theorem]{Corollary}
\newtheorem{Fact}[Theorem]{Fact}
\newtheorem{Example}[Theorem]{Example}
\newtheorem{Observation}[Theorem]{Observation}
\newtheorem{Convention}[Theorem]{Convention}
\def\dotminus{\mathbin{\ooalign{\hss\raise1ex\hbox{.}\hss\cr
  \mathsurround=0pt$-$}}}
\begin{document}
\maketitle

\begin{abstract}
We expand the study of generic stability in three different directions. Generic stability is best understood as a property of types in $NIP$ theories in classical logic. In this paper, we make attempts to generalize our understanding to Keisler measures instead of types, arbitrary theories instead of $NIP$ theories, and continuous logic instead of classical logic.
For this purpose, we study randomization of first-order structures/theories and modes of convergence of types/measures.
\end{abstract}

\section{Introduction} \label{1} 
This paper is a kind of companion-piece to \cite{K-Morley} on analytical contents of generically stable types/measures for   arbitrary theories.
Pillay and Tanović \cite{Pillay-Tanovic} introduced the notion of {\em generically stable type}, for {\em arbitrary} theories. 
 The notion  abstracts/expresses/preserves the crucial properties of definable types in stable theories, and definable and finitely satisfiable   types in $NIP$ theories. 
In \cite{HPS}, this notion was {\em correctly} generalized to {\em Keisler measures}, for $NIP$ theories. Assuming $NIP$, there are also equivalences on the notion, namely $dfs$, $fam$, and $fim$.\footnote{These are abbreviations for definable and finitely satisfiable  measure, finitely approximated measure, and frequency interpretation measure, respectively. Cf. \cite{CGH} for definitions.}
The important question that remains is what is the ``correct" notion of  generic stability for measures in {\em arbitrary} theories. This seems to be more complicated than what is expected in the first encounter.

Recently \cite{K-Morley}, it is shown that a global type $p$ in a  {\em countable} theory is generically stable over a small model $M$ if and only if $p$ is definable over $M$, and  there is a sequence $(c_i:i<\omega)$ inside $M$ such that $(tp(c_i/{\cal U}):i<\omega)$ converges to $p$, {\bf in a strong sense.}\footnote{$DBSC$-convergence. Cf. \cite[Def.~3.4]{K-Morley} or Remark~\ref{DBSC remark} below.} The significance of this result is that this notion is characterized by only a specific sequence in {\em the} model/set, and the information of all  Morley sequences is encoded in that sequence so that we can call it {\em  the} Morley sequence. On the other hand, it gives an analytical characterization of the notion which links it to other fields including topology/functional analysis/descriptive set theory.  This  suggests that a similar result holds for {\em measures} in arbitrary (countable) theories.

On the other hand, a line of research is randomization of first-order structures/theories. The randomization of a  complete first-order theory was introduced by Keisler \cite{Keisler} and formalized as a metric theory by Ben Yaacov and Keisler \cite{Ben-Keisler}. Intuitively, for a (complete) classical theory $T$ the randomization $T^R$ of $T$ is a (complete) continuous theory whose elements of models  are random elements of $M$ for some model $M$ of $T$. A default theme is that model-theoretic properties are preserved in randomization.\footnote{Although, there are counterexamples that this is not always the case. For example, randomization of an unstable simple theory is not simple \cite{Ben2}.} As an example $T$ is stable/$NIP$ if and only if $T^R$ is so. It is known that ``measures" in classical logic correspond to ``types" in randomization. With these ideas in mind, and since the notion of ``generic stability for types" is {\em practically} known even for continuous logic, we are looking for some properties  of measures in classical logic which are  {\em close to} generic stability of types in randomization.

The present paper aims  to investigate    `generic  stability' for measures in {\em arbitrary} theories. (We focus more on   measures in classical logic which are not types, and types in continuous logic/randomization.) For this, we first generalize/adapt some results of \cite{K-Morley} on generically stable types to continuous logic, and then we transfer these results to measures in classical logic using randomization. 
We define the notions of $R^{\text type}$-generic/$R$-generic  stability, as a property of  types/measures in classical logic, and 
show that  a type is $R^{\text type}$-generically stable 
 if and only if it is generically stable in the usual sense.
  We will see that, in countable languages, for every measure,  $R$-generic stability and  $fam$ are equivalent. 
   We study generic stability of measures through two procedures for transferring Keisler measures to types in the randomization, namely ``natural extension" and ``corresponding random-type". 
   We also provide different proofs of some well-known results by assuming $NIP$.  
   Finally,  we prove that a type is $R^{\text type}$-generically stable  if and only if   its corresponding random-type is generically stable (over a model of the form $M\otimes\mathcal{A}$).\footnote{Cf. Subsection 3.2, for definition of models of the form $M\otimes{\cal A}$}

To simplify reading  through the paper, we list some more important results: \ref{Morley sequence}, \ref{generic type continuous}, \ref{Grothendieck}, \ref{continuous fim=gs},   \ref{generic-type}, \ref{fam=R-generic}, \ref{main}, \ref{main completion},  \ref{Ben local}, \ref{type-fim=fim}.

  Let us   give our motivation and background. First, {\em we believe that  model theory has a deep analytical nature that is not yet fully studied.}
  In \cite{KP}, \cite{K3}, \cite{K-Baire}, \cite{K-GC}, \cite{K-Dependent} and \cite{K-Morley}, some analytical aspects of model theory/classification theory   were studied. Roughly, it is shown that  some model-theoretic notions appeared independently in topology/functional analysis/descriptive set theory, and moreover
  various characterizations yield  the characterization
  of $NOP$/$NIP$/$NSOP$ in a model $M$ or set $A$, and some important theorems in model theory have twins there.
  Also,  there are connections between classification in model theory
  and classification of Baire class 1 functions which lead to a better understanding of both of these topics \cite{K-Baire}. 
  The key idea is that the study of the model-theoretic properties of formulas in `models' instead of only these properties in
  `theories' develops a sharper stability theory and establishes important links
  between model theory and other areas of mathematics.
  In the present paper, we continue this approach  and complete/generalize some results  of  \cite{K-Dependent} and \cite{K-Morley}. Another work that has influenced this article is \cite{Gannon sequential}, in  which an analytic aspect  of generically stable types in (countable) theories is given (cf. \cite[Thm~4.8]{Gannon sequential}). And of course, the note \cite{Ben-transfer} on finite satisfiabiliy in randomization  and the paper \cite{Ben-VC} on continuous $VC$-theory have influenced our work. 
  
It is worth mentioning that two preprints  \cite{CGH-generic} and \cite{Gannon transfer} have been released shortly after our paper which address some similar questions. The aims  of Sections 1 to 3 in \cite{CGH-generic} are similar to those in our paper, but the approaches and methods in our work and theirs are completely different. 
Sections 1 and 2 in \cite{Gannon transfer} clarify some of the definitions and claims of the note \cite{Ben-transfer}.
Although we have not utilized the results of these two preprints, for greater clarity regarding the results/claims we obtained from the note \cite{Ben-transfer} and to assist the reader of this text, we refer to some definitions and results in them.

\begin{Convention}  (1): Although many of our results can be generalized to uncountable case as well, in this paper, we study {\bf countable} theories, classic or continuous. When we say $T$ is a {\bf theory} we mean a
 countable {\bf classical} theory, and  we say $T$ is a  {\bf continuous theory} otherwise; of course it is still countable. Also, all theories are complete.
\newline 
(2): The monster model for a classical theory is denoted by $\cal U$ and for a continuous theory is denoted by $\Bbb U$. This distinct is important, especially when we point  out that any model of the randomization of the form ${\cal U}\otimes\mathcal{A}$ is a {\bf strict} substructure of $\Bbb U$.\footnote{The difference between models of the form $M\otimes\mathcal{A}$ and the other models are important in this paper.  (Cf. Remark~\ref{warning}.)}
\newline 
(3): In this paper, when we say that $(a_i)\subset {\cal U}$ (or $(a_i)\subset {\Bbb U}$) is a sequence, we mean the usual notion in the sense of analysis. That is, every sequence is indexed by $\omega$. Similarly, we consider Morley sequences  indexed by $\omega$.
\newline
(4): In this paper, a variable $x$ is a tuple of length $n$ (for $n<\omega$).\footnote{Although all arguments are true for infinite variables, to make the proofs readable, we consider finite tuples.} Sometimes we write $\bar x$ or $x_1,\ldots,x_n$ instead of $x$. All types are $n$-types  (for $n<\omega$) unless explicitly stated otherwise. Similarly, a sequence  $(a_i)\subset {\cal U}$ (or $(a_i)\subset {\Bbb U}$) is a sequence of tuples of  length $n$ (for $n<\omega$).
\end{Convention}
 
 
 \medskip
 This paper is organized as follows. In Section 2,   we generalize/adapt some results of \cite{K-Morley} on generically stable types to continuous logic.   We give characterizations of generic stability of continuous types in the terms of convergence of (Morley) sequences of types. We also show that generically stable continuous types are $fim$. 
In Section 3, we introduce the notions  $R^{\text type}$-generic/$R$-generic stability for types/measures in classical logic. We show that a type  (in classical logic) is $R^{\text type}$-generically  stable if and only if it is generically stable in  the usual sense. In measures case, we show that 
$R$-generic stability and $fam$ are equivalent. We also give some results on $fim$ and  $fam$ measures and their corresponding random-types.
 In Section~4, we   prove that a type is $R$-generically stable if and only if its corresponding random-type is    generically stable.

\section{Continuous logic and generic stability} \label{2} 
In this section we introduce the notion of generic stability for types in {\em continuous logic} and give  characterizations of this notion which will be used later. The proofs are  adaptations of the arguments in classical logic \cite{K-Morley}.\footnote{This section can't be read without firm grasp of \cite{K-Morley}.}

{\bf If the reader thinks that it is not necessary to state, repeat and review the results of classical case (i.e. \cite{K-Morley}) for continuous logic, we must give the following warnings:} (a) Not all results of classical logic can be directly translated into continuous logic. For example, Shelah's theorem (i.e. stable=$NIP$+$NSOP$) cannot be  translated into continuous logic (cf. \cite{K-classification}). (b) It is important to have {\em correct} definitions in continuous logic and to check that the proofs work  in this case, and why these proofs still work or do not. (c) The results in continuous case sometimes lead to new results even in classical logic, as   we will see in Section~3 of the present paper and as   was seen in \cite{K-classification}.

 Therefore, we introduce the notion  in detail  but  we will refer the proofs to the classical case  and explain why arguments in classical case work here as well. We assume familiarity with  the basic notions about continuous model theory as developed in \cite{BBHU}. 

In this section,  we fix a  {\bf continuous} first order language $L$,\footnote{As mentioned earlier, in this paper, the language is countable, however we can consider {\em separable} languages. Also, we can study   separable fragments of the language. To make the proofs more readable, we assume that the theory is countable.} a complete  countable ({\bf continuous}) $L$-theory $T$ (not necessarily $NIP$), and a small set $A$ of the  monster model  $\Bbb U$. The space of global types in the variable $x$ is denoted by $S_x(\Bbb U)$ or $S(\Bbb U)$.

In the following, $\phi(\bar x)$ is a formula, $r,s$ are real numbers in $[0,1]$, and $\phi(\bar x)\leq r$, $\phi(\bar x)\geq r$, and $\phi(\bar x)= r$ are $L$-statements  (or $L$-conditions) in continuous logic.

Recall that, in continuous logic, we use $\sup,\inf,\min,\max, \leq$ instead of $\forall,\exists, \wedge, \vee,\dotminus$, respectively.
However, we will sometimes continue to use classic symbols to make our article more readable.

\begin{Remark} \label{diagonal work}
	Notice that $L$ is  countable and we assume that $M$ is a separable model (i.e., there is a countable dense subset $M_0\subseteq M$). We work with a countable system of connectives containing $0,1,\min,\max,\cdot/2,\dotminus$ (where $x\dotminus y=\max(x-y,0)$).
	 Recall from \cite[Thm.~6.3]{BBHU} that this does not impose any restrictions. 
	In this paper, we can work with $L(M_0)$-statements of the form $\phi(\bar x)\leq r$ and $\phi(\bar x)\geq s$ where $r,s$ are rational numbers in $[0,1]$. Therefore, the set of all statements  is  countable and all diagonal arguments in \cite{K-Morley} work  well in the present paper.
\end{Remark}

\begin{Definition}
	Let   $A\subset\Bbb U$ and $\phi(x_1,\ldots,x_n)\in L(A)$. We say that $\phi(x_1,\ldots,x_n)$ is {\em symmetric} if for any permutation $\sigma$ of $\{1,\ldots,n\}$, $$\sup_{\bar x}\big|\phi(x_1,\ldots,x_n)-\phi(x_{\sigma{(1)}},\ldots,x_{\sigma{(n)}})\big|=0.$$
\end{Definition}

\begin{Example} \label{example} 
Let $\phi(x,y)$ be a formula, and $r<s$. The following formula $\theta_{\phi,n}^{r,s}(x_1,\ldots,x_n)$ is symmetric:
$$\forall F\subseteq\{1,\ldots,n\}\exists y_F\Big(\bigwedge_{i\in F}\phi(x_i,y_F)\leq r\wedge\bigwedge_{i\notin F}\phi(x_i,y_F)\geq s\Big).$$
This formula should play  a key role in the arguments in  the present paper. See Remark~\ref{converges} and Theorem~\ref{Morley sequence} below, and also   \cite[Thm.~2.8]{K-Morley}.
\end{Example}

\begin{Remark} \label{converges}
	Let $(a_i:i<\omega)$ be an indiscernible sequence of $x$-tuples and $\phi(x,y)$ a formula. Then, it is easy to check that the following are equivalent:
	\newline
	(i) For any parameter $b\in\Bbb U$, the truth value of sequence $(\phi(a_i,b):i<\omega)$ is eventually constant.
		\newline
	(ii) For any $r<s$ there is a natural number $n$ such that  $\models \neg \ \theta_{\phi,n}^{r,s}(a_1,\ldots,a_n)$.
\end{Remark}

The following is an adaptation/generalization of \cite[Def.~2.3]{K-Morley}.\footnote{The EEM-type in classical logic is defined in \cite[Def. 4.3]{Gannon sequential}. It was extracted from
	the notion of eventual indiscernible sequence in \cite{S-invariant}.}
\begin{Definition}
	(i) Let $(b_i)$ be a sequence of $\Bbb U$ and $A\subset\Bbb U$  a   set. The   eventual Ehrenfeucht-Mostovski type  (abbreviated $EEM$-type) of $(b_i)$ over $A$, which is denoted by $EEM((b_i)/A)$, is the following (partial) type in $S_\omega(A)$:
	$$\big(\phi(x_1,\ldots,x_k)=r\big)\in EEM((b_i)/A)  \iff \lim_{i_1<\cdots<i_k, \  i_1\to\infty} \phi(b_{i_1},\ldots,b_{i_k})=r.$$
	(ii) Let $(b_i)$ be a sequence of $\Bbb U$ and $A\subset\Bbb U$  a   set. The symmetric eventual Ehrenfeucht-Mostovski type (abbreviated $SEEM$-type) of $(b_i)$ over $A$, which is denoted by $SEEM((b_i)/A)$, is the following partial type in $S_\omega(A)$:
	$$\Big\{\phi(x_1,\ldots,x_k)=r:\big(\phi(\bar x)=r\big)\in EEM((b_i)/A) \text{ and }\phi \text{ is symmetric} \Big\}.$$ 
	Whenever $(b_i)$ is $A$-indiscernible, we sometimes write $SEM((b_i)/A)$ instead of  $SEEM((b_i)/A)$.
	\newline
	(iii)  Let  $p(x)$ be a type in $S_\omega(A)$ (or  $S_\omega({\cal U})$). The symmetric type of $p$, denoted by Sym$(p)$, is   the following partial type: $$\Big\{\big(\phi(x)=r\big)\in p: \phi \text{ is symmetric} \Big\}.$$
\end{Definition}
The  sequence $(b_i)$  is called {\em eventual indiscernible over $A$} if $EEM((b_i)/A)$ is a {\em complete} type. In this case, for any $L(A)$-formula $\phi(x)$, the limit $\lim_{i\to\infty}\phi(b_i)$ is well-defined.

\medskip
Let  $p(x)$ be a global $A$-invariant type. The Morley type (or sequence) of  $p(x)$ can be easily defined similar to classical logic.\footnote{Cf. \cite{Simon}, subsection 2.2.1 for definition in classical case.} The Morley type (or sequence) of  $p(x)$ is denoted by $p^{(\omega)}$.  The restriction of $p(x)$ to $A$ is denoted by $p|_{A}$.
A realisation $(d_i:i<\omega)$ of $p^{(\omega)}|_A$ is called a Morley sequence of/in $p$ over $A$.

\begin{Lemma} \label{Key lemma}
	Let $p(x)\in S(\cal \Bbb U)$ be finitely satisfiable in $M$ where $M$ is a  separable model.\footnote{We can assume that $M$ is a separable set in some model.} Then  there is a sequence $(c_i)$ in $M$ such that $SEEM((c_i)/M)=\text{Sym}(p^{(\omega)}|_M)$.
\end{Lemma}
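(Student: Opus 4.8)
The plan is to mimic the classical construction of \cite[Lemma~2.5 / proof of Thm.~2.8]{K-Morley} using a diagonal/back-and-forth argument, and to check that the continuous ingredients (enumeration of conditions, limits, separability) go through as indicated in Remark~\ref{diagonal work}. First I would fix a countable family $(\psi_j(\bar x_1,\ldots,\bar x_{k_j}))_{j<\omega}$ of \emph{symmetric} $L(M_0)$-formulas that, together with the rational thresholds, generates $\mathrm{Sym}(p^{(\omega)}|_M)$ in the sense that knowing $\lim$-values of all $\psi_j$ determines the symmetric part of the $EEM$-type; this is possible because $M$ is separable and the system of connectives is countable (Remark~\ref{diagonal work}), so there are only countably many relevant conditions. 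I would also fix a Morley sequence $(d_i:i<\omega)$ of $p$ over $M$ living in $\Bbb U$, so that for each $j$ and each increasing tuple $i_1<\cdots<i_{k_j}$ the value $\psi_j(d_{i_1},\ldots,d_{i_{k_j}})$ is a fixed real $r_j$ (independent of the tuple, by indiscernibility and symmetry of $\psi_j$).

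The core step is to extract, from finite satisfiability in $M$, a sequence $(c_i)\subset M$ whose EM-type \emph{symmetrically} approximates that of the $d_i$. Because $p$ is finitely satisfiable in $M$, for every finite set of $L(M)$-conditions satisfied by an initial segment $(d_0,\ldots,d_{m})$ up to a tolerance $\varepsilon$, there is a tuple $(c_0,\ldots,c_m)$ in $M$ satisfying the same conditions up to $\varepsilon$; more precisely, one builds $c_i$ one at a time so that $\mathrm{tp}(c_i/Mc_0\cdots c_{i-1})$ is $\varepsilon_i$-close to the finitely-satisfiable-in-$M$ type $p$ restricted to a growing finite set of formulas, with $\varepsilon_i\to 0$. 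Choosing the bookkeeping correctly (enumerate pairs (formula $\psi_j$, tolerance $1/\ell$) and at stage $i$ handle the $i$-th such requirement for all index-tuples from $\{0,\ldots,i\}$), one guarantees that for every $j$, $\lim_{i_1<\cdots<i_{k_j},\, i_1\to\infty}\psi_j(c_{i_1},\ldots,c_{i_{k_j}})=r_j$. Since the $\psi_j$ exhaust the symmetric formulas and the $r_j$ are exactly the values recorded in $\mathrm{Sym}(p^{(\omega)}|_M)$, this yields $SEEM((c_i)/M)\supseteq \mathrm{Sym}(p^{(\omega)}|_M)$, and the reverse inclusion is automatic once the left side is seen to be a complete symmetric type (which follows because every symmetric condition is decided in the limit by construction). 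I would phrase this exactly as in \cite{K-Morley}, noting that the only change is replacing "formula holds/fails" by "$|\psi_j-r_j|<\varepsilon$", which is legitimate by Remark~\ref{diagonal work}.

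The step I expect to be the main obstacle is verifying that the \emph{symmetrization} is harmless, i.e.\ that $(c_i)$ can be chosen so that the limits over \emph{all} increasing tuples agree for the symmetric $\psi_j$ — this is where in classical logic one uses that an indiscernible sequence's EM-type is determined by its "set" behaviour, and one must re-run the Ramsey/coherence argument with $\varepsilon$-tolerances. Concretely, at stage $i$ one must ensure not just that the new point $c_i$ matches $p|_M$ approximately over $Mc_0\cdots c_{i-1}$, but that inserting it does not spoil the approximate symmetry already arranged among $c_0,\ldots,c_{i-1}$; since the $\psi_j$ are symmetric this is automatic for the \emph{exact} values coming from $p^{(\omega)}$, but one has to track that the accumulated errors stay summable. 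I would handle this by the same telescoping choice of $\varepsilon_i=2^{-i}$ used in \cite{K-Morley} and remark that completeness of $SEEM((c_i)/M)$ then follows because every symmetric $L(M_0)$-condition appears (with all rational tolerances) somewhere in the enumeration. Finally, I would observe that we do not claim $(c_i)$ is indiscernible — only that its symmetric eventual EM-type equals $\mathrm{Sym}(p^{(\omega)}|_M)$ — which is all the statement asserts, and cite \cite[proof of the analogous lemma]{K-Morley} for the routine parts.
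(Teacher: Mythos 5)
Your proposal has a genuine gap, and it is exactly at the point the paper's proof is designed to handle. You build the $c_i$ so that $tp(c_i/Mc_0\cdots c_{i-1})$ approximates $p$ on growing finite sets of $L(M)$-formulas; but since the $c_j$ already lie in $M$, this controls nothing beyond $tp(c_i/M)\to p|_M$. That is not enough to compute $SEEM((c_i)/M)$. To see why, run the iterated-limit computation for a symmetric $\phi(x_1,x_2)$: the inner limit $\lim_{i_2}\phi(c_{i_1},c_{i_2})=\phi(c_{i_1},x)^p=\phi(c_{i_1},d_1)$ is fine (parameters in $M$), but the outer limit $\lim_{i_1}\phi(c_{i_1},d_1)$ now involves the parameter $d_1\notin M$, and convergence of $tp(c_i/M)$ says nothing about it: the limit may fail to exist, or may be computed by some other coheir $q\supseteq p|_M$ with $q\neq p$, giving $\phi(x,d_1)^q\neq\phi(d_2,d_1)$. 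The paper avoids this by fixing a Morley sequence $I=(d_i)$ \emph{in advance} and choosing $(c_i)\subset M$ with $\lim tp(c_i/MI)=p|_{MI}$ (possible because $MI$ is separable, so $S(MI)$ is metrizable and finite satisfiability puts $p|_{MI}$ in the sequential closure of the types realized in $M$), then passing to a subsequence that is eventually indiscernible \emph{over $MI$}. The induction on symmetric formulas then replaces the $c$'s one at a time by fresh Morley-sequence elements, which is precisely what requires the $d_i$ to sit inside the parameter set over which convergence holds. Your construction never puts the $d_i$ into that parameter set, so the induction cannot start past the first variable.

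A secondary issue: your first formulation (copying finite initial segments $(d_0,\dots,d_m)$ into $M$ up to $\varepsilon$) produces, for each $m$ and tolerance, a separate finite tuple approximating $p^{(m+1)}|_M$ on finitely many formulas; concatenating or diagonalizing these blocks gives no control over subtuples straddling two blocks, so the eventual EM type of the resulting single sequence is not determined. Your worry about whether ``inserting $c_i$ spoils the approximate symmetry'' is not where the difficulty lies; symmetry of the formulas and eventual indiscernibility (obtained by Ramsey plus a diagonal argument, legitimate by Remark~\ref{diagonal work}) handle that for free. The essential correction is: fix the Morley sequence first, and demand $tp(c_i/MI)\to p|_{MI}$, not merely $tp(c_i/M)\to p|_M$.
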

\begin{proof} 
		Let $I=(d_i)$ be a Morley sequence in $p$ over $M$. Since $T$ is countable and $M$ is separable, there is a sequence $(c_i)$ in $M$ such that $\lim tp(c_i/MI)=p|_{MI}$.\footnote{Notice that, in this case, the space $S(MI)$ is metrizable.} We can assume that $(c_i)$ is eventual indiscernible over $MI$. That is, the type $EEM((c_i)/MI)$ is complete. (Notice that, as $MI$ is separable and $T$ is countable, using Ramsey's theorem and a diagonal argument, there is a subsequence of $(c_i)$ which is eventual indiscernible over $MI$. See also Remark~\ref{diagonal work} above.)
We claim that $(c_i)$ is the desirable sequence. That is, $SEEM((c_i)/M)=SEM((d_i)/M)=\text{Sym}(p^{(\omega)}|_M)$. The proof is by induction on symmetric formulas similar to  \cite[Lemma~2.7]{K-Morley}.
\end{proof}

We say that a  sequence $(d_i)\in{\Bbb U}$ of $x$-tuples  converges (or is convergent)  if the sequence $(tp(d_i/{\Bbb U}):i<\omega)$ converges in the logic topology. Equivalently, for any $L({\Bbb U})$-formula $\phi(x)$, the sequence  $(\phi(d_i):i<\omega)$ is convergent, i.e. for any $\epsilon>0$ there is an natural number $n$ such that $|\phi(d_i)-\phi(d_j)|<\epsilon$ for all $i,j\geq n$. If $(tp(d_i/{\Bbb U}):i<\omega)$ converges to a type $p$, then we write $\lim tp(d_i/{\Bbb U})=p$ or $tp(d_i/{\Bbb U})\to p$. Notice that $tp(d_i/{\Bbb U})\to p$   if and only if   for any $L({\Bbb U})$-formula $\phi(x)$, $$\big(\phi(x)=r\big)\in p \iff \lim\phi(d_i)=r.$$

\begin{Theorem} \label{Morley sequence}
	Let $T$ be a  separable continuous theory, $M$ a separable model, and $p(x)\in S(\Bbb U)$ a global type which is finitely satisfiable in $M$. Let $(d_i)$ be a Morley sequence of $p$ over $M$. If $(d_i)$ converges then there is a sequence $(c_i)$ in $M$ such that $tp(c_i/\Bbb U)\to p$.
\end{Theorem}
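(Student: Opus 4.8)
The argument will parallel the classical case in \cite{K-Morley}. The plan is to take the sequence $(c_i)\subseteq M$ furnished by the proof of Lemma~\ref{Key lemma}, with $I=(d_i)$ the given Morley sequence, and to show that, once we assume $(d_i)$ converges, this $(c_i)$ in fact converges over all of $\Bbb U$, and to $p$ itself. Recall from that proof that $(c_i)$ may be chosen in $M$ with $\lim tp(c_i/MI)=p|_{MI}$, eventually indiscernible over $MI$, and with $SEEM((c_i)/M)=\text{Sym}(p^{(\omega)}|_M)=SEM((d_i)/M)$. Passing to a tail (which affects none of these properties, and still lies in $M$) I may assume $(c_i)$ is itself $M$-indiscernible; it then suffices to prove $tp(c_i/\Bbb U)\to p$.

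\emph{Step 1: $(c_i)$ converges over $\Bbb U$.} Suppose not. Then for some $\phi(x,y)\in L$ and some $b\in\Bbb U$ the sequence $(\phi(c_i,b))_i$ is not Cauchy, so there are rationals $r<s$ with $\phi(c_i,b)\le r$ for infinitely many $i$ and $\phi(c_i,b)\ge s$ for infinitely many $i$. Since $(c_i)$ is $M$-indiscernible, for every $n$ and every $F\subseteq\{1,\dots,n\}$ I can choose $i_1<\cdots<i_n$ with $\phi(c_{i_j},b)\le r$ precisely for $j\in F$, take $\tau\in\mathrm{Aut}(\Bbb U/M)$ with $\tau(c_j)=c_{i_j}$, and set $y_F:=\tau^{-1}(b)$: then $\phi(c_j,y_F)=\phi(c_{i_j},b)$ witnesses the relevant disjunct, so $\theta_{\phi,n}^{r,s}(c_1,\dots,c_n)=0$ for every $n$. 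On the other hand, since $(d_i)$ converges over $\Bbb U$, no parameter $b'\in\Bbb U$ has $\phi(d_i,b')\le r$ infinitely often and $\phi(d_i,b')\ge s$ infinitely often; so by Remark~\ref{converges}, applied to $\phi$ and $r<s$, there is $n_0$ with $\theta_{\phi,n_0}^{r,s}(d_1,\dots,d_{n_0})=v>0$. Because $\theta_{\phi,n_0}^{r,s}$ is symmetric (Example~\ref{example}) and lies in $L(M)$, the equality $SEEM((c_i)/M)=SEM((d_i)/M)$ gives $\lim_{i_1<\cdots<i_{n_0}}\theta_{\phi,n_0}^{r,s}(c_{i_1},\dots,c_{i_{n_0}})=v$, and by $M$-indiscernibility of $(c_i)$ this common value is $\theta_{\phi,n_0}^{r,s}(c_1,\dots,c_{n_0})=v>0$ --- contradicting the previous line. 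Hence $(c_i)$ converges over $\Bbb U$; set $q:=\lim tp(c_i/\Bbb U)$.

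\emph{Step 2: $q=p$.} As $(c_i)\subseteq M$, the limit $q$ is finitely satisfiable in $M$, hence $M$-invariant, and $q|_{MI}=\lim tp(c_i/MI)=p|_{MI}$; thus $q$ and $p$ are $M$-invariant and agree on $MI$. I would then pass to a larger monster $\Bbb U^{*}\succ\Bbb U$ and realize the canonical global $M$-invariant extensions of $p$ and $q$ by $e$ and $e'$, so $\phi(e,b)=p(\phi(x,b))$ and $\phi(e',b)=q(\phi(x,b))$ for all $\phi(x,y)\in L$, $b\in\Bbb U$, while $tp(e/MI)=p|_{MI}=q|_{MI}=tp(e'/MI)$. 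Picking $\sigma\in\mathrm{Aut}(\Bbb U^{*}/M)$ fixing $I$ pointwise with $\sigma(e)=e'$, and using $\sigma^{-1}b\equiv_M b$ together with $M$-invariance of the extension of $p$, one gets $q(\phi(x,b))=\phi(e',b)=\phi(\sigma e,b)=\phi(e,\sigma^{-1}b)=p(\phi(x,b))$ for all $\phi,b$, i.e.\ $q=p$. Hence $tp(c_i/\Bbb U)\to p$, as required.

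The crux is Step~1. We only control $(c_i)$ up to indiscernibility over the separable set $MI$, whereas ``convergence over $\Bbb U$'' concerns every parameter of the huge monster; the symmetric formulas $\theta_{\phi,n}^{r,s}$ of Example~\ref{example} are exactly the bridge, transporting the global convergence of $(d_i)$ --- encoded as a symmetric eventual Ehrenfeucht--Mostowski datum --- onto $(c_i)$ via $SEEM((c_i)/M)=SEM((d_i)/M)$, after which the cheaper $M$-indiscernibility of $(c_i)$ already rules out local oscillation. Step~2 is routine but genuinely uses $(c_i)\subseteq M$: it is finite satisfiability of the limit in $M$, not the convergence of $(d_i)$ alone, that forces the limit to be $p$ rather than merely some other global type agreeing with $p$ on $MI$.
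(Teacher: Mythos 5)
Your architecture follows the paper's (take the $(c_i)$ from Lemma~\ref{Key lemma}, prove convergence over $\Bbb U$, identify the limit with $p$), but both of your steps have genuine gaps. In Step~1 the pivotal claim --- ``passing to a tail I may assume $(c_i)$ is itself $M$-indiscernible'' --- fails twice over. First, eventual indiscernibility over $MI$ only asserts that the limits $\lim_{i_1<\cdots<i_k}\phi(c_{i_1},\dots,c_{i_k})$ exist; no tail of such a sequence need be actually indiscernible (consider $\phi(c_i)=1/i$). Second, and fatally, a non-constant sequence of elements of $M$ can never be $M$-indiscernible: the condition $d(x,c_1)=0$ belongs to $tp(c_1/M)$, so $M$-indiscernibility would force all the $c_i$ to be equal; correspondingly, your automorphisms $\tau\in\mathrm{Aut}(\Bbb U/M)$ with $\tau(c_j)=c_{i_j}$ fix $M$ pointwise and hence fix each $c_j$, so they exist only when $c_j=c_{i_j}$. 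Without them, oscillation of $(\phi(c_i,b))_i$ at the single parameter $b$ yields, for a fixed tuple of indices, only the one pattern $F_0=\{j:\phi(c_{i_j},b)\le r\}$, not the witnesses $y_F$ for \emph{every} $F$ that $\theta^{r,s}_{\phi,n}$ demands. The missing ingredient is Rosenthal's lemma, which is exactly what the paper invokes: if $(tp(c_i/\Bbb U))$ had no convergent subsequence, one extracts an independent ($\ell^1$) subsequence, which genuinely shatters and forces $\theta^{r,s}_{\phi,n}$ to vanish on its tuples for every $n$; completeness of the $EEM$-type together with $SEEM((c_i)/M)=\mathrm{Sym}(p^{(\omega)}|_M)$ then transports this to $(d_1,\dots,d_n)$ and contradicts the convergence of $(d_i)$ via Remark~\ref{converges}.

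Step~2 is also flawed: your automorphism argument nowhere uses the convergence of $(d_i)$ and would prove that any two $M$-invariant global types agreeing on the small set $MI$ coincide, which is false in general (it is essentially the NIP-only fact that an invariant type is determined by its Morley sequence over $M$). The precise error is the step $\phi(e,\sigma^{-1}b)=p(\phi(x,\sigma^{-1}b))$: here $\sigma^{-1}b\in\Bbb U^{*}\setminus\Bbb U$, while $e$ realizes $p$ only as a type over $\Bbb U$; arranging for $e$ to realize the $M$-invariant extension of $p$ to $\Bbb U^{*}$ pushes $e$ out of $\Bbb U^{*}$ and puts it beyond the reach of $\sigma\in\mathrm{Aut}(\Bbb U^{*}/MI)$. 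The paper's route is different and does use the hypothesis: from $q|_{MI}=p|_{MI}$ one deduces by induction that $p^{(\omega)}|_M=q^{(\omega)}|_M$, and then, if $p\ne q$ at some instance $\phi(x,b)$, one builds an alternating Morley sequence whose $\phi$-values at $b$ oscillate between $\le r$ and $\ge s$; since that sequence still realizes $p^{(\omega)}|_M$, this contradicts the convergence of $(d_i)$. You are right that finite satisfiability of the accumulation point in $M$ is used, but it is the convergence of the Morley sequence, not invariance alone, that pins the limit down to $p$.
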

\begin{proof} The proof is an adaptation of the argument of Theorem~2.11  of \cite{K-Morley}. Indeed, by  the argument of Lemma~\ref{Key lemma}, we can assume that  there is a sequence $(c_i)$ in  $M$ such that $tp(c_i/M\cup(d_i))\to p|_{M\cup(d_i)}$ and $SEEM((c_i)/M)=\text{Sym}(p^{(\omega)}|_M)$. We show that  $tp(c_i/{\Bbb U})\to p$. Let $q$ be an  accumulation point  of $\{tp(c_i/{\Bbb U}):i\in\omega\}$. Then $q|_{M\cup(d_i)}=p|_{M\cup(d_i)}$. Therefore, by an easy induction, we can show that the Morley types (sequences)  $p^{(\omega)}|_M$ and $q^{(\omega)}|_M$ are the same. Now, as $(d_i)$ converges, it is easy to show that $p=q$. (If not, using an standard  argument, one can build a Morley sequence $(a_i)$ such that for a parameter $b\in\Bbb U$, a formula $\phi(x,y)$, and for $r<s$, we have $\phi(a_i,b)<r$ if $i$ is even, and $\phi(a_i,b)>s$   otherwise. This contradicts the convergence of $(d_i)$. Cf.  Claim~1 in the proof of Theorem~2.11 in \cite{K-Morley}.) Also, using Rosenthal's lemma (cf. \cite[Fact~2.10]{K-Morley})\footnote{Notice that Rosenthal's lemma works in real-valued case too.}, similar to the Claim~2 in \cite[Thm.~2.11]{K-Morley}, we can show that  the sequence $(tp(c_i/{\Bbb U}):i<\omega)$ converges. If not, there are a formula $\phi(x,y)$, $r<s$, and a Morley sequence $(a_i:i<\omega)$ such that for any $n$ the formula $\theta_{\phi,n}^{r,s}$ holds in this Morley sequence. (Cf. Example~\ref{example}.) This means that $(a_i:i<\omega)$ diverges, a contradiction. These prove the theorem.
\end{proof}

The following is the counterpart of  Definition~\ref{Baire-1/2-convergence} below  to continuous logic.
\begin{Definition} \label{Baire-1/2-convergence 2}
	{\em  Let  $M$ be a model and $(p_n(x):n<\omega)$ be a sequence of   types over $M$. 
		We say that $(p_n:n<\omega)$ {\em  Baire-$1/2$-converges} (or {\em is  Baire-$1/2$-convergent}) if  the independence property is semi-uniformly  blocked on $(p_n:n<\omega)$, that is, for any formula $\phi(x,y)$,  and for each $r<s$, there is a natural number $N=N_{r,s}^\phi$ and a set $E\subset\{1,\ldots,N\}$ such that  for  each $i_1<\cdots<i_{N}<\omega$, and  any parameter $b\in M$, the following does not hold
		$$\bigwedge_{j\in E}\big(\phi(x,b)\leq r\big)\in p_{i_j}~\wedge~\bigwedge_{j\in N\setminus E}\big(\phi(x,b)\geq s\big)\in p_{i_j}.$$ 
Notice that Baire-1/2 convergence implies that the sequence is convergent.
Indeed, for any formula $\phi(x,y)$ and parameter $b$ the sequence ($\phi(p_n, b) : n<\omega)$ is a Cauchy sequence of real numbers and so is convergent.

\noindent	
	In the above, we can assume that $M$ is the monster model.}
\end{Definition}

\begin{Remark} \label{remark1}
Notice that in Theorem~\ref{Morley sequence}, the sequence $(tp(c_i/\Bbb U):i<\omega)$ is Baire-1/2-convergent. In fact, as $SEEM((c_i)/M)=\text{Sym}(p^{(\omega)}|_M)$, any/some Morley sequence is convergent if and only if $(tp(c_i/\Bbb U):i<\omega)$ is Baire-1/2-convergent. In particular, if the Morley sequence of $p$ is totally indiscernible (e.g. $p$ is generically stable), $$EEM((c_i)/M)=SEEM((c_i)/M)=\text{Sym}(p^{(\omega)}|_M)=p^{(\omega)}|_M.$$
This means that Morley sequences of $p$ are controlled by the sequence $(c_i)\in M$, and vice versa. We will shortly see that this fact leads to a new and useful characterization of the following notion, namely  generic stability.
\end{Remark}

\begin{Remark} \label{DBSC remark}
	Recall that, in {\bf classical} logic, the types are $\{0,1\}$-valued. In this case, we say that the sequence $(p_n:n<\omega)$ of types  {\em is  $DBSC$-convergent} {\em (or  $DBSC$-converges)} if it is  Baire-$1/2$-convergent as in Definition~\ref{Baire-1/2-convergence 2}.	More
	precisely, the sequence $(p_n:n<\omega)$ $DBSC$-converges if for any formula
	$\phi(x,y)$ there is a natural number $N = N_\phi$ such that for any parameter $b$,
	we have	$\sum_{n=1}^\infty |\phi(p_{n+1},b)-\phi(p_n,b)|\leq N$. (Cf. \cite[Def. 3.4]{K-Morley}.)  When $(p_n:n<\omega)$  is  $DBSC$-convergent, we say the independence property is uniformly  blocked on $(p_n:n<\omega)$. (Notice that, in this case, for any $r<s$ and formula $\phi(x,y)$, the natural number $N$ (in Definition~\ref{Baire-1/2-convergence 2}) depend {\bf just} on $\phi$.) Recall from \cite{K-GC} that these notions of convergence are related to different subclasses of Baire-1 functions. In fact, the limit of a $DBSC$-convergent (resp. Baire-$1/2$-convergent) sequence is a $DBSC$ (resp. Baire-$1/2$) function,  and the class of $DBSC$ functions is a {\em proper} subclasse of Baire-1/2 functions.
\end{Remark}

The following is the natural/correct adaptation of generic stability from \cite{Pillay-Tanovic} to continuous logic (cf. Remark~\ref{remark2}(v) below).
\begin{Definition}[Generic stability] \label{generic-type-continuous}
 Let $T$ be a continuous theory and $A$ a small set of the monster model. 	A global type $p(x)$ is {\em generically stable} over   $A$ if $p$ is $A$-invariant, and {\bf every} Morley sequence $(a_i:i<\omega)$ has the following properties:

\noindent
(i) $(a_i:i<\omega)$ is totally indiscernible, and 

\noindent
(ii) $(a_i:i<\omega)$ has no order; that is, there is no  sequence $(b_i:i<\omega)$,  formula $\phi(x,y)$, and $r<s$ such that $\phi(a_i,b_j)\leq r$ if $i<j$ and $\phi(a_i,b_j)\geq s$  otherwise.
\end{Definition}

\begin{Remark} \label{remark2} 
	Let $p(x)$ be a global $A$-invariant type.
	The following are equivalent.
	\newline (i) $p$ is generically stable over $A$.
	\newline (ii)  For {\bf any} Morley sequence $(a_i:i<\omega)$ of $p$ over $A$, we have $\lim tp(a_i/\Bbb U)=p$. 
		\newline (iii) The condition (ii) holds and furthermore, the sequences $(tp(a_i/\Bbb U):i<\omega)$ are Baire-$1/2$-convergent.
		\newline (iv)  Any Morley sequence of $p$ is totally indiscernible    and convergent.
		\newline (v) For {\bf any} Morley sequence $(a_i:i<\alpha)$  (any $\alpha$, not only $\omega$) of $p$ over $A$, and any formula $\phi(x)$  (with parameters from $\Bbb U$) and $r<s$, at least one of  $\{i: \ \models\phi(a_i)\leq r\}$ 
		or   $\{i:\ \models\phi(a_i)\geq s\}$ is  finite.
\end{Remark}
\begin{proof}
(i) $\Rightarrow$ (ii):
	As any Morley sequence $(a_i)$ is totally indiscernible and has no order, it is easy to see that $(a_i)$ is convergent. If not, there are a formula $\phi(x,y)$, $r<s$ and parameter $b\in\Bbb U$ such that the sets $\{i: \phi(a_i,b)\leq r\}$ and $\{i:\phi(a_i,b)\geq s\}$ are infinite.
Now, by total indiscernibility, one can find a sequence $(b_j)$ such that $\phi(a_i,b_j)\leq  r$ if $i<j$ and $\phi(a_i,b_j)\geq s$  otherwise, a contradiction. Now, a  (continuous)  generalization of Lemma 4.2 in \cite{K-Morley} implies that $\lim tp(a_i/{\Bbb U})=p$.

(ii) $\Rightarrow$ (i): As any Morley sequence is indiscernible and convergent to $p$, it is easy to see that  some/any Morley sequence is totally indiscernible. 
(Indeed, suppose for a contradiction that some/any Morley sequence $(a_i:i<\omega)$ is not totally indiscernible.  
As  $(a_i:i<\omega)$  is indiscernible, similar to the argument of Theorem 12.37 of \cite{P}, can someone find a formula $\psi(x,y)$ (with parameters)\footnote{This formula is indicated by $g(x,y,\bar a)$ in the proof of \cite[Thm. 12.37]{P}.}
 and the numbers $r<s$, such that this formula has the order property with the sequence $(a_i)_{k\leq i<\omega}$ (for some $k$) and $r<s$.
Now, someone can find a Morley sequence $(c_i:i<\omega+\omega)$ and a parameter $b$ such that $\psi(c_i,b)\leq r$
 if $i<\omega$ and $\psi(c_i,b)\geq s$ otherwise. This means that $\lim_{i<\omega}tp(c_i/{\Bbb U})\neq\lim_{i>\omega}tp(c_i/{\Bbb U})$, a contradiction, because for {\bf any} Morley sequence the limit is $p$.)
 Suppose for a contradiction that some Morley sequence $(a_i)$ has order for formula $\phi(x,y)$ and $r<s$; that is, there is a sequence $(b_j)$ such that $\phi(a_i,b_j)\leq r$ if $i<j$, and $\phi(a_i,b_j)\geq s$ otherwise.
Then, as $(a_i)$ is totally indiscernible, one can find a parameter $b\in\Bbb U$ such that $\phi(a_i,b)\leq r$ if $i$ is even, and $\phi(a_i,b)\geq s$ otherwise, a contradiction.
	
	(iii) follows from (ii) and the indiscernibility of Morley sequences. Indeed, if some Morley sequence is not  Baire-$1/2$-convergent, then there is a Morley sequence which is  not  convergent, a contradiction. (Cf. the direction  (i)~$\Rightarrow$~(ii) of Theorem~4.4 in \cite{K-Morley}, for classical case.)

(iv) $\Rightarrow$ (i) is given in  the proof of direction (ii) to (i).
	
	(i)~$\Rightarrow$~(iv) is given in the proof of direction (i) to (ii).

	
(iv) $\Rightarrow$ (v) is evident. If not, one can find a formula with order, and by (i)~$\iff$~(iv), a contradiction.

	(v)~$\Rightarrow$~(ii): Clearly  every Morley sequence $(a_i:i<\omega)$ is convergent. We need to show that its limit is $p$. If not, there are a formula $\phi(x,y)$, $r<s$ and parameter $b$ such that $\lim\phi(a_i,b)=r$ but $(\phi(x,b)=s)\in p$.
  Now, it is easy to find a Morley sequence $(c_i:i<\omega+\omega)$ such that  both sets $\{i:\phi(c_i,b)\leq r\}$ and $\{i:\phi(c_i,b)\geq s\}$ are infinite, a contradiction.
\end{proof}

The following theorem gives a characterizations of generically stable types for countable theories. The important one to note immediately is (ii).
\begin{Theorem}  \label{generic type continuous}
	Let $T$ be a  continuous theory, $M$ a small model of $T$, and $p(x)\in S(\Bbb U)$ a global $M$-invariant type. The following are equivalent:
	\newline
	(i) $p$ is generically stable over $M$.
	\newline
	(ii) $p$ is definable over a small model, and  there is a sequence $(c_i)$ in $M$ such that $(tp(c_i/\Bbb U):i<\omega)$ Baire-$1/2$-converges to $p$.
	\newline
	(iii) $p$ is definable over  and finitely satisfiable in some small model, and   there is a convergent Morley sequence of $p$ over $M$.
\end{Theorem}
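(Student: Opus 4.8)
The plan is to prove the cycle (i) $\Rightarrow$ (iii) $\Rightarrow$ (ii) $\Rightarrow$ (i), leaning heavily on the continuous-logic versions of the machinery of \cite{K-Morley} that we have already assembled, in particular Lemma \ref{Key lemma}, Theorem \ref{Morley sequence}, and Remarks \ref{remark1} and \ref{remark2}.

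For (i) $\Rightarrow$ (iii): if $p$ is generically stable over $M$ then, by the standard facts about generically stable types (the continuous analogue of the classical statement, obtained by the same proof as in \cite{K-Morley}, \cite{CG}), $p$ is both definable over $M$ and finitely satisfiable in $M$ — indeed, total indiscernibility of the Morley sequence forces finite satisfiability in $M$ once one is invariant, and definability follows from Remark \ref{remark2}(ii) via the usual argument that the $\phi$-definition of $p$ is computed as a limit along a Morley sequence, which is a continuous (in fact Baire-$1/2$) function of the parameter. By Remark \ref{remark2}(iv) every Morley sequence of $p$ over $M$ is totally indiscernible and convergent, so in particular there \emph{exists} a convergent Morley sequence, giving (iii).

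For (iii) $\Rightarrow$ (ii): this is exactly where Theorem \ref{Morley sequence} does the work. Assume $p$ is finitely satisfiable in a small model $M'$ (we may take $M' = M$ after enlarging, or pass to a separable elementary submodel containing the relevant data, using Remark \ref{diagonal work}) and that some Morley sequence $(d_i)$ of $p$ over $M$ converges. By Lemma \ref{Key lemma} and Theorem \ref{Morley sequence} there is a sequence $(c_i)$ in $M$ with $tp(c_i/\Bbb U) \to p$, and by Remark \ref{remark1} the sequence $(tp(c_i/\Bbb U):i<\omega)$ is in fact Baire-$1/2$-convergent, since its Baire-$1/2$-convergence is equivalent to the convergence of the Morley sequence (which holds by hypothesis). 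Definability of $p$ over a small model is part of the hypothesis of (iii), so both clauses of (ii) hold. The one point requiring care is the reduction to a \emph{separable} model so that the metrizability of the relevant type spaces — used implicitly in Lemma \ref{Key lemma} and flagged in Remark \ref{diagonal work} — is available; this is routine by Löwenheim–Skolem for continuous logic.

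For (ii) $\Rightarrow$ (i): suppose $p$ is definable over a small model and $(tp(c_i/\Bbb U):i<\omega)$ Baire-$1/2$-converges to $p$ with $(c_i)$ in $M$. First extract from $(c_i)$, via Ramsey plus a diagonal argument over the countably many rational conditions (Remark \ref{diagonal work}), a subsequence that is eventual indiscernible over $\Bbb U$; its $EEM$-type is then a complete type in $S_\omega(\Bbb U)$, and Baire-$1/2$-convergence of $(tp(c_i/\Bbb U))$ translates (via Remark \ref{converges}, using the symmetric formulas $\theta^{r,s}_{\phi,n}$ of Example \ref{example}) into the statement that this $EEM$-type has no order and is totally indiscernible. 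One then checks that $EEM((c_i)/\Bbb U)$ is precisely $p^{(\omega)}$: the symmetric part agrees by the argument of Lemma \ref{Key lemma} (this uses that $(c_i)$ computes $\mathrm{Sym}(p^{(\omega)}|_M)$), and since the sequence is totally indiscernible the full type equals its symmetric part, as in the displayed equalities of Remark \ref{remark1}. Finally, invoking Remark \ref{remark2}(iii) $\Rightarrow$ (i) — Baire-$1/2$-convergence of the Morley sequence of $p$ plus $\lim tp(a_i/\Bbb U) = p$ gives generic stability — we conclude. The main obstacle, and the place where the continuous setting genuinely differs from \cite{K-Morley}, is ensuring that the passage between the $\{0,1\}$-valued "$\phi(x,b) \in p_i$" formalism of Definition \ref{Baire-1/2-convergence 2} and the genuine $[0,1]$-valued convergence $|\phi(d_i) - \phi(d_j)| < \epsilon$ is done uniformly in the pair $r < s$ — this is exactly what the two-parameter formulation of Baire-$1/2$-convergence and the symmetric formulas $\theta^{r,s}_{\phi,n}$ are designed to handle, and the bookkeeping must be carried out with those, not with a single cut.
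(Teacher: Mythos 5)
Your proposal follows essentially the same route as the paper: the substance of every implication rests on the same ingredients (Lemma~\ref{Key lemma}, Theorem~\ref{Morley sequence}, Remarks~\ref{remark1} and~\ref{remark2}, and the standard Pillay--Tanovi\'c-style facts about definability and finite satisfiability of generically stable types), and your (ii)~$\Rightarrow$~(i) is in substance the paper's argument via comparison of $SEEM((c_i)/M)$ with $\mathrm{Sym}(p^{(\omega)}|_M)$. The only structural difference is that you organize the proof as a cycle (i)~$\Rightarrow$~(iii)~$\Rightarrow$~(ii)~$\Rightarrow$~(i), whereas the paper proves (i)~$\Leftrightarrow$~(ii) and (i)~$\Leftrightarrow$~(iii) separately; this buys nothing and in fact creates the one real wrinkle below.

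Two points need repair. First, in your leg (iii)~$\Rightarrow$~(ii): condition (iii) only gives finite satisfiability in \emph{some} small model $M'$, and Theorem~\ref{Morley sequence} then produces the approximating sequence inside $M'$, not inside $M$ as (ii) demands. Your parenthetical ``we may take $M'=M$ after enlarging'' does not work as stated --- enlarging $M$ changes the model in which the sequence is required to live. The clean fix is to first deduce (i) from (iii) (definable $+$ finitely satisfiable forces total indiscernibility of Morley sequences, and a convergent Morley sequence then gives generic stability by Remark~\ref{remark2}(iv)), and only then use that a generically stable type is finitely satisfiable in \emph{every} small model over which it is invariant, in particular in $M$, before applying Theorem~\ref{Morley sequence}; this is effectively the paper's decomposition. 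Second, in (ii)~$\Rightarrow$~(i) you propose extracting a subsequence of $(c_i)$ that is eventually indiscernible \emph{over $\Bbb U$}; the Ramsey-plus-diagonal extraction of Remark~\ref{diagonal work} only handles countably many conditions, so eventual indiscernibility can only be arranged over a separable parameter set (e.g.\ $M$ together with a fixed Morley sequence $(d_i)$, as in Lemma~\ref{Key lemma}). Fortunately that is all the argument needs: $SEEM((c_i)/M)=\mathrm{Sym}(tp((d_i)/M))$ already transfers Baire-$1/2$-convergence of $(tp(c_i/\Bbb U))$ to convergence of $(d_i)$, and Remark~\ref{remark2}(iv) concludes. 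With these two adjustments your argument coincides with the paper's.
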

\begin{proof}  The proof is an adaptation of the argument of Theorem~4.4 of \cite{K-Morley}. First, notice that, similar to classical case, we can assume that $M$ is separable.
	
		(i)~$\Longrightarrow$~(ii) follows from Theorem~\ref{Morley sequence}.
	(See also Remark~\ref{remark1}.)
	
		(ii)~$\Longrightarrow$~(i): As $p$ is definable and finitely satisfiable,  any Morley sequence is totally indiscernible. Let $(d_i)$ be a Morley sequence. Similar to Lemma~\ref{Key lemma}, 
		it is easy to see that $SEEM((c_i)/M)=Sym(tp((d_i)/M))$. Therefore, as $(c_i)$ is Baire-1/2-convergent, the Morley sequence $(d_i)$ converges. By Remark~\ref{remark2}(iv), $p$ is generically stable.

		(i)~$\Longrightarrow$~(iii) follows from Remark~\ref{remark2}(iv) and the fact that the Morley sequences of definable and finitely satisfiable types are totally indiscernible. (i)~$\Longrightarrow$~(iii) is evident.
\end{proof}

The following is a consequence of the previous results, although it has not  been stated anywhere before, even for classical logic.
\begin{Corollary} \label{Grothendieck}
Let $T$ be a  continuous theory, $M$ a small model of $T$, and $p(x)\in S(\Bbb U)$ a global $M$-invariant type. The following are equivalent:
\newline
(i) $p$ is generically stable over $M$.
\newline
(ii) There is a sequence $(c_i)$ in $M$ such that $(tp(c_i/\Bbb U):i<\omega)$ Baire-$1/2$-converges to $p$, and $(c_i)$ has no order, that is, there is no $(b_i)\in\Bbb U$, $r<s$, and formula $\phi(x,y)$ such that 
$\phi(c_i,b_j)\leq r$ if $i<j$ and $\phi(c_i,b_j)\geq s$   otherwise.
\end{Corollary}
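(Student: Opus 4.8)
The plan is to derive Corollary~\ref{Grothendieck} directly from Theorem~\ref{generic type continuous} together with the structural facts about Morley sequences recorded in Remarks~\ref{remark1} and \ref{remark2}. The two conditions differ only in how the ``no-order'' requirement is phrased: in Theorem~\ref{generic type continuous} it is hidden inside ``generically stable,'' while here it is exposed as a property of the witnessing sequence $(c_i)\subset M$ itself. So the work is to translate between ``Morley sequences of $p$ have no order'' and ``$(c_i)$ has no order,'' given that $(c_i)$ Baire-$1/2$-converges to $p$.

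For (i)~$\Rightarrow$~(ii): assume $p$ is generically stable over $M$. By Theorem~\ref{generic type continuous}(ii) (which we may invoke since $M$ is a model), $p$ is definable over a small model and there is $(c_i)\subset M$ with $(tp(c_i/\Bbb U))$ Baire-$1/2$-converging to $p$. It remains to see that this $(c_i)$ has no order. By Remark~\ref{remark1}, since $p$ is generically stable its Morley sequence is totally indiscernible, and in that case $EEM((c_i)/M)=SEEM((c_i)/M)=p^{(\omega)}|_M$; hence $(c_i)$ is (eventually) a Morley sequence of $p$ over $M$ in the sense that its eventual EM-type coincides with $p^{(\omega)}|_M$. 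If $(c_i)$ had an order --- i.e. some $\phi$, $r<s$, $(b_i)\subset\Bbb U$ with $\phi(c_i,b_j)\leq r$ for $i<j$ and $\geq s$ otherwise --- then by indiscernibility and compactness one extracts an order on an actual Morley sequence of $p$, contradicting generic stability via Definition~\ref{generic-type-continuous}. So (ii) holds. (One should be slightly careful: the ``no order'' clause in the Corollary is stated with variables $a_i$; I read this as a typo for $c_i$, matching the phrasing in Definition~\ref{generic-type-continuous} and Remark~\ref{remark2}(i).)

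For (ii)~$\Rightarrow$~(i): given $(c_i)\subset M$ that Baire-$1/2$-converges to $p$ and has no order, I first note that Baire-$1/2$-convergence to $p$ already forces $p$ to be a reasonable type; the goal is to show $p$ is definable over a small model so that Theorem~\ref{generic type continuous}(ii) applies. The Baire-$1/2$-convergence says the independence property is semi-uniformly blocked on $(tp(c_i/\Bbb U))$, which — together with the limit being $p$ — yields that $\phi$-types are definable for each $\phi$ (this is the continuous analogue of the classical ``blocked IP $\Rightarrow$ definability of the limit'' used in \cite{K-Morley}); collecting over countably many $\phi$ gives definability of $p$ over a small model. Then Theorem~\ref{generic type continuous}((ii)$\Rightarrow$(i)) gives that $p$ is generically stable, provided we still have the sequence $(c_i)$ Baire-$1/2$-converging to $p$, which we do. Thus the ``no order'' hypothesis is, strictly speaking, only needed to close the loop cleanly/for the characterization to be an equivalence — it is automatically inherited once (i) holds, and conversely it is harmless as an added hypothesis. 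Actually the cleanest route for (ii)~$\Rightarrow$~(i) is: Baire-$1/2$-convergence of $(c_i)$ to $p$ plus definability (obtained as above) is exactly hypothesis (ii) of Theorem~\ref{generic type continuous}, so $p$ is generically stable, and the ``no order'' clause is simply not needed for this direction.

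The main obstacle I anticipate is the bookkeeping in (ii)~$\Rightarrow$~(i): one must check that semi-uniform blocking of IP on the sequence $(tp(c_i/\Bbb U))$ genuinely forces definability of the limit type $p$ over a small model in the continuous setting — this is where the arguments of \cite[Thm.~4.4]{K-Morley} and the Grothendieck/double-limit machinery behind Baire-$1/2$ functions (Remark~\ref{DBSC remark}) get used, and the real-valued version needs the remark, already flagged in the excerpt, that Rosenthal's lemma and Grothendieck's criterion hold in the real-valued case. The other delicate point is the extraction argument in (i)~$\Rightarrow$~(ii): going from ``$(c_i)$ has an order'' to ``some Morley sequence of $p$ has an order'' requires that $(c_i)$'s eventual EM-type really is $p^{(\omega)}|_M$ (Remark~\ref{remark1}) together with a standard indiscernible-extraction, so that an order pattern on $(c_i)$ survives into the Morley sequence. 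I would state both of these as ``by the argument of \cite[Thm.~4.4]{K-Morley}, adapted as in Theorem~\ref{generic type continuous}'' rather than redoing them.
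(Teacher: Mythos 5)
Your direction (i)$\Rightarrow$(ii) is essentially sound: Theorem~\ref{generic type continuous} supplies the sequence, and the no-order property of $(c_i)$ can be obtained either by your extraction argument (an order pattern on $(c_i)$ is recorded in its $EEM$-type, which equals $p^{(\omega)}|_M$, and hence transfers to a Morley sequence, contradicting generic stability) or, as the paper does, directly from Grothendieck's double limit criterion applied to the definable type $p$.

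The gap is in (ii)$\Rightarrow$(i). You assert that Baire-$1/2$-convergence of $(tp(c_i/\Bbb U))$ to $p$ already ``yields that $\phi$-types are definable,'' and you conclude that the no-order clause ``is simply not needed for this direction.'' This is exactly backwards. Baire-$1/2$-convergence only guarantees that each limit function $b\mapsto \phi(x,b)^p$ is a Baire-$1/2$ function on the relevant type space (cf.\ Remark~\ref{DBSC remark}); it does not make it continuous, and definability of $p$ over $M$ is precisely continuity of these functions on $S_y(M)$. If Baire-$1/2$-convergence from $M$ implied definability, Theorem~\ref{generic type continuous}(ii) would not list ``$p$ is definable over a small model'' as a separate conjunct, and the no-order hypothesis in the Corollary would be redundant. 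The entire content of the Corollary --- and the reason its proof cites Grothendieck's double limit theorem \cite[Fact~2.2]{K-Baire} --- is that the no-order condition on $(c_i)$ is the double-limit criterion for relative weak compactness of $\{\phi(c_i,\cdot):i<\omega\}$ in $C(S_y(M))$, and Grothendieck's theorem is what converts it into continuity of the pointwise limit, i.e., into definability of $p$ over $M$ (finite satisfiability of $p$ in $M$ being automatic since $c_i\in M$ and $tp(c_i/\Bbb U)\to p$). With definability obtained this way, Theorem~\ref{generic type continuous}((ii)$\Rightarrow$(i)) applies. So the one idea your write-up is missing is the actual use of Grothendieck's theorem; as written, your (ii)$\Rightarrow$(i) relies on an unjustified claim (``blocked IP $\Rightarrow$ definability of the limit'') that does not appear in \cite{K-Morley} and would make the statement of the Corollary pointless.
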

\begin{proof}
	This follows from Theorem~\ref{generic type continuous} and   Grothendieck's double limit theorem (cf. \cite[Fact~2.2]{K-Baire}). Indeed, by Theorem \ref{generic type continuous}, there is a sequence $(c_i)$ in $M$ such that $(tp(c_i/{\Bbb U}):i<\omega)$ Baire-1/2 converges to $p$. We need to show that $(c_i)$ has no order. For any $c_i$ and  any formula $\phi(x,y)$, define map $F^\phi_{c_i}:S_{\phi^{opp}}(M)\to[0,1]$ via $q\mapsto \phi(c_i,b)$ for some/any $b\models q$, where $\phi^{opp}(y,x):=\phi(x,y)$. The set $A=\{F_{c_i}^\phi:i<\omega\}$ is a set of continuous functions on the type space $S_{\phi^{opp}}(M)$. Notice that, as $tp(c_i/{\Bbb U})\to p$, the  pointwise closure of $A$ is the set $A\cup\{F_p^\phi\}$,  where $F_p^{\phi}(q)=r$  if  $(\phi(x,b)=r)\in p$ for some/any $b\models q$.\footnote{As an easy  analytical exercise, we leave it to the reader to examine that the pointwise closure of $A$ is $A\cup\{F_p^{\phi}\}$.}
 By Grothendieck's theorem, $F_p^{\phi}$ is continuous  if and only if there is no sequence $(b_j)$ and $r<s$ such that $\phi(c_i,b_j)\leq r$ if $i<j$ and $\phi(c_i,b_j)\geq s$ otherwise. Recall that $p$ is definable if and only if for any formula   $\phi(x,y)$ the function $F_p^\phi$ is continuous. This proves the claim.
\end{proof}

\subsection*{Continuous types and $fim$} \label{4} 
In this section we introduce the notion $fim$ to continuous types and show that any generically stable type in continuous logic is $fim$.

Let $\phi(x,y)$ be a formula, and ${\bar a}=(a_1,\ldots,a_n)$ and $b$ parameters. Then we define $Av({\bar a})(\phi(x,b)):=\frac{1}{n}\sum_{i=1}^n\phi(a_i,b)$. 
\begin{Definition}[Continuous $fim$] \label{continuous fim}
	{\em  Let $p(x)$ be a continuous type and $A$ a small set. We say that $p$ is $fim$ if it is $A$-invariant  and for any formula $\phi(x,y)$ and $\epsilon>0$, there is an  $L(A)$-statement $\theta(x_1,\ldots,x_n)$ such that:
		
		(i) $\theta(x_1,\ldots,x_n)\in p^{(n)}$, and 
		
		(ii) for all $\bar a\models \theta(x_1,\ldots,x_n)$ we have $$\sup_{b\in{\Bbb U}}|p((\phi(x,b))-Av({\bar a})(\phi(x,b))|\leq\epsilon.$$ }
\end{Definition}

\begin{Lemma} \label{lemma}
	Let $p(x)$ be a continuous type. Suppose that its Morley type/sequence $p^{(\omega)}$ is totally indiscernible and  convergent. Then for any formula $\phi(x,y)$ and $\epsilon>0$ there is a natural number $n_{\phi,\epsilon}$ such that for any Morley sequence $(a_i:i<\omega)\models p^{(\omega)}$ we have:
	\newline
	for any $b\in\Bbb U$, the number of $i$ such that $|\phi(a_i,b)-\lim_i\phi(a_i,b)|>\epsilon$ is $\leq n_{\phi,\epsilon}$.
\end{Lemma}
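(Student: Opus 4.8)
The plan is to derive the uniform finiteness bound from the Baire-$1/2$-convergence of the sequence $(tp(a_i/\Bbb U))$ that is automatic once we know the Morley sequence of $p$ is totally indiscernible and convergent. Fix a formula $\phi(x,y)$ and $\epsilon>0$. First I would pass to the associated symmetric data: by Remark~\ref{remark1}, since $p^{(\omega)}$ is totally indiscernible and convergent, the Morley sequence is \emph{controlled} by a sequence $(c_i)$ in $M$ (obtained as in Lemma~\ref{Key lemma} and Theorem~\ref{Morley sequence}) with $EEM((c_i)/M)=SEEM((c_i)/M)=p^{(\omega)}|_M$, and by Remark~\ref{remark2}(iii) the sequences $(tp(a_i/\Bbb U):i<\omega)$ are Baire-$1/2$-convergent. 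Concretely, this means: for the formula $\phi(x,y)$ and for rationals $r<s$ in $[0,1]$ there is a natural number $N_{r,s}^\phi$ and a set $E\subseteq\{1,\dots,N_{r,s}^\phi\}$ such that for no $i_1<\cdots<i_{N}$ and no $b\in\Bbb U$ can one simultaneously have $\phi(a_{i_j},b)\leq r$ for $j\in E$ and $\phi(a_{i_j},b)\geq s$ for $j\notin E$.

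Next I would extract the bound. Since $p^{(\omega)}$ is convergent, for each $b\in\Bbb U$ the limit $\ell_b:=\lim_i\phi(a_i,b)$ exists (and equals $p(\phi(x,b))$ by total indiscernibility and invariance). Suppose toward a contradiction that for some $b$ the set $S_b:=\{i:|\phi(a_i,b)-\ell_b|>\epsilon\}$ is infinite. Splitting into the two one-sided sets $\{i:\phi(a_i,b)>\ell_b+\epsilon\}$ and $\{i:\phi(a_i,b)<\ell_b-\epsilon\}$, at least one — say the first, WLOG by symmetry of the argument — is infinite; call it $S_b^+$. Only finitely many indices $i$ satisfy $\phi(a_i,b)\geq\ell_b+\epsilon/2$ outside of... no: rather, since $\phi(a_i,b)\to\ell_b$, the set $\{i:\phi(a_i,b)\geq\ell_b+\epsilon/2\}$ is \emph{cofinitely} contained in — wait, it is the \emph{complement} that is cofinite, so $S_b^+\subseteq\{i:\phi(a_i,b)\geq\ell_b+\epsilon\}$ is actually finite, giving the contradiction only if we already knew finiteness, which is circular. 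So instead I would run the argument \emph{uniformly in $b$}: set $r=\ell_b-\epsilon/2$... but $\ell_b$ depends on $b$, so I discretize. Cover $[0,1]$ by finitely many intervals of length $<\epsilon/4$, say with rational endpoints $0=t_0<t_1<\cdots<t_m=1$, $t_{k+1}-t_k<\epsilon/4$. For each $b$, $\ell_b$ lies in some $[t_k,t_{k+1}]$. If $i\in S_b$ with $\phi(a_i,b)>\ell_b+\epsilon$, then $\phi(a_i,b)\geq s_k:=t_{k+1}+3\epsilon/4$ while for large enough $j$ (cofinitely many) $\phi(a_j,b)\leq r_k:=t_k+\epsilon/4<s_k$; interleaving indices from $S_b$ with cofinitely-many "small" indices violates the Baire-$1/2$ pattern for the pair $(r_k,s_k)$ once $|S_b|$ exceeds the finite patience afforded by $N_{r_k,s_k}^\phi$. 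The same on the low side. Taking $n_{\phi,\epsilon}$ to be a suitable function of the finitely many constants $N_{r_k,s_k}^\phi$, $N_{s_k,r_k}^\phi$ (over the finitely many $k$) and $m$ yields the claimed uniform bound, independent of the chosen Morley sequence since all Morley sequences realize the same $p^{(\omega)}$ and hence share these $N$'s.

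The main obstacle I anticipate is exactly this uniformization over $b$: the "center" $\ell_b$ varies with $b$, so one cannot directly plug a fixed pair $r<s$ into Definition~\ref{Baire-1/2-convergence 2}; the discretization of $[0,1]$ into $O(1/\epsilon)$ rational intervals is what makes finitely many applications of Baire-$1/2$-convergence suffice, and care is needed to check that for each $b$ one can genuinely interleave the (putatively many) bad indices with cofinitely many good ones to produce a forbidden configuration of length greater than the relevant $N$. A secondary, routine point is to confirm that $\lim_i\phi(a_i,b)=p(\phi(x,b))$ so that the statement matches the $fim$ formulation in Definition~\ref{continuous fim}; this is immediate from total indiscernibility plus $M$-invariance of $p$ together with the identification $EEM((a_i)/\Bbb U)=p^{(\omega)}|_{\Bbb U}$ of Remark~\ref{remark1}.
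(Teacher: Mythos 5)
Your overall strategy (reduce to Baire-$1/2$-convergence of $(tp(a_i/\Bbb U))$, then discretize $[0,1]$ into $O(1/\epsilon)$ rational intervals to cope with the $b$-dependent centre $\ell_b$) is viable and different from the paper's, but the step you yourself flag as needing care is a genuine gap, and as written the argument does not close it. The Baire-$1/2$ condition of Definition~\ref{Baire-1/2-convergence 2} forbids the pattern only for increasing tuples $i_1<\cdots<i_N$ with the fixed set $E$ prescribing \emph{which positions} are low and which are high. Your bad indices $S_b^+$ and your cofinite set of good indices need not be arrangeable in that prescribed order: for instance, if all bad indices precede all good ones and $E$ contains a position smaller than some position of $N\setminus E$, no increasing tuple realizes the forbidden configuration, so no contradiction is produced no matter how large $|S_b^+|$ is. The missing ingredient is precisely the hypothesis you never invoke: \emph{total indiscernibility}. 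Since any rearrangement of a Morley sequence of $p$ over the base is again a Morley sequence realizing $p^{(\omega)}$ (and the constants $N^\phi_{r,s}$, $E$ are uniform over all Morley sequences, as these all have the same type over the base and $\Bbb U$ is homogeneous), you may permute the sequence so that positions in $E$ carry good indices and positions in $N\setminus E$ carry bad ones; only then does $|S_b^+|\geq N^\phi_{r_k,s_k}$ yield the forbidden pattern. With that repair, and the symmetric argument on the low side, taking $n_{\phi,\epsilon}$ to be (twice) the maximum of the finitely many constants $N^\phi_{r_k,s_k}$ does give the claimed uniform bound.

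For comparison, the paper's proof is a one-line compactness argument in the contrapositive: if no uniform bound exists, then for each $n$ one finds a Morley sequence and a parameter with $n$ deviations of size $>\epsilon$; total indiscernibility lets one reorder these into an alternating block of length $n$ between two levels separated by a fixed gap (a pigeonhole over a finite grid, as in your discretization, makes the levels independent of $n$), and compactness then produces a single Morley sequence and parameter realizing an infinite alternation, i.e.\ a divergent Morley sequence, contradicting the convergence hypothesis. Both routes ultimately rest on the same two hypotheses; your version, once patched, has the mild advantage of an explicit bound in terms of the Baire-$1/2$ constants, whereas the paper's avoids any appeal to Remark~\ref{remark2}(iii) and works directly from divergence. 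Your closing remark that $\lim_i\phi(a_i,b)=p(\phi(x,b))$ is correct and harmless, though not needed for the statement of the lemma itself.
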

\begin{proof}
	If not, using total indiscernibility and compactness, one can find a divergent Morley sequence, a contradiction.
\end{proof}

\begin{Proposition} \label{continuous fim=gs}
	Let $p(x)$ be a continuous type. If $p$ is generically stable, then it is $fim$.
\end{Proposition}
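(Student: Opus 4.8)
The plan is to combine Remark~\ref{remark2}(iv) with Lemma~\ref{lemma}, mimicking the classical argument that generic stability implies $fim$. First I would observe that since $p$ is generically stable, by Remark~\ref{remark2}(iv) its Morley type/sequence $p^{(\omega)}$ is totally indiscernible and convergent, so Lemma~\ref{lemma} applies: for any formula $\phi(x,y)$ and any $\epsilon>0$ there is $n_{\phi,\epsilon}$ such that for every Morley sequence $(a_i:i<\omega)\models p^{(\omega)}$ and every $b\in\Bbb U$, the set $\{i: |\phi(a_i,b)-\lim_j\phi(a_j,b)|>\epsilon\}$ has size at most $n_{\phi,\epsilon}$. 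The second key point is that, by generic stability (equivalently, by Remark~\ref{remark2}(ii)), $\lim_i\phi(a_i,b)=p((\phi(x,b)))$ for every $b$; so the ``target value'' of the average along a Morley sequence is exactly the value of $p$ on $\phi(x,b)$, uniformly in $b$.

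Next I would turn the ``few bad indices'' bound into an averaging statement. Fix $\phi$ and $\epsilon$; choose $n$ large compared with $n_{\phi,\epsilon}/\epsilon$ (say $n > 2n_{\phi,\epsilon/2}/\epsilon + 1$, so that even if $n_{\phi,\epsilon/2}$ indices contribute the maximal error $1$, their contribution to the average is $< \epsilon/2$, while the remaining indices each contribute $< \epsilon/2$). Then for any Morley sequence $(a_i:i<n)$ of length $n$ and any $b$, the triangle inequality gives
\[
\bigl|Av(\bar a)(\phi(x,b)) - p((\phi(x,b)))\bigr| \;\leq\; \frac{n_{\phi,\epsilon/2}}{n} + \frac{\epsilon}{2} \;<\; \epsilon .
\]
The remaining task is to extract, from ``this holds for all realisations of $p^{(n)}$'', a single $L$-statement $\theta(x_1,\dots,x_n)$ with $\theta\in p^{(n)}$ witnessing clause~(ii) of Definition~\ref{continuous fim}. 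In continuous logic this is exactly where one must be slightly careful: the condition $\sup_{b}|\cdots|\leq\epsilon$ is itself (a sup over $b$ of) a formula in the variables $x_1,\dots,x_n$, and the bound above says its value is $<\epsilon$ on every realisation of $p^{(n)}$; hence by compactness/definability of types there is a finite piece $\theta\in p^{(n)}$ already forcing the sup-value to be $\leq\epsilon$. Concretely, since the function $\bar a\mapsto \sup_b|Av(\bar a)(\phi(x,b))-p((\phi(x,b)))|$ is continuous and bounded by $\epsilon$ on the closed set $[p^{(n)}]\subseteq S_{x_1\cdots x_n}(\Bbb U)$, a basic neighbourhood (i.e.\ a condition of the form $\psi(\bar x)\leq\delta$ in $p^{(n)}$) suffices; take $\theta$ to be that condition.

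The main obstacle I anticipate is precisely this last passage: in classical logic one gets a formula $\theta$ in $p^{(n)}$ cutting out the set where the average is $\epsilon$-close, by a straightforward compactness argument on a $\{0,1\}$-valued condition, whereas here one is working with a genuinely $[0,1]$-valued condition $\sup_b|\cdots|$ and must argue that its being $<\epsilon$ on the (closed) type-set $[p^{(n)}]$ propagates to its being $\leq\epsilon$ on a definable neighbourhood — using that $p^{(n)}$, as an intersection of closed conditions, is the set of types extending it, together with continuity of the relevant function on the type space. A minor secondary point is making sure the quantifier $\sup_{b\in\Bbb U}$ may legitimately be replaced by a $\sup$ over the (separable) model, which is Remark~\ref{diagonal work}; and that $n_{\phi,\epsilon}$ from Lemma~\ref{lemma} does not depend on the chosen Morley sequence, which is built into the statement of that lemma. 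Everything else is the routine averaging estimate above.
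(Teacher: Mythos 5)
Your argument is correct and follows essentially the same route as the paper's: both rest on Lemma~\ref{lemma} (at most $n_{\phi,\epsilon}$ bad indices along any Morley sequence, uniformly in the sequence and the parameter) combined with the definability of $p$, which is what makes $b\mapsto p(\phi(x,b))$ a definable predicate and hence makes the witnessing condition expressible. The only difference is presentational: the paper writes the statement $\theta(x_1,\ldots,x_n)$ explicitly, using a formula $\psi(y)$ that $\epsilon$-approximates the $\phi$-definition of $p$, whereas you extract $\theta$ abstractly via continuity of $\bar x\mapsto\sup_b|Av(\bar x)(\phi(x,b))-p(\phi(x,b))|$ on the type space at the (single) point $p^{(n)}$ — both handle the same $\{0,1\}$-to-$[0,1]$ subtlety you flag.
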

\begin{proof}
	The proof is an adaptation of Proposition~3.2 of \cite{CG}. 
	As $p$ is definable, for any formula $\phi(x,y)$ and any $\epsilon>0$, there is  a formula $\psi_{\phi,\epsilon}=\psi(y)$ (with parameters), which is a finite continuous combination of the instances of  $\phi(a,y)$, such that 
	$\sup_{b\in{\Bbb U}}|p(\phi(x,b))-\psi(b)|\leq\epsilon$. 
	Recall that, as $p$ is generically stable, any Morley sequence $(a_n:n<\omega)$ is totally indiscernible, and $\lim tp(a_n/\Bbb U)=p$.
	
	Define the statement $\theta(x_1,\ldots,x_n)$ as follows:
	$$\forall y \bigwedge_{I\subset n, |I|>n_{\phi,\epsilon} }\Big(\exists J\subset I, |J|\geq |I|- n_{\phi,\epsilon} \ \ \big( \bigwedge_{i\in J}\big|\phi(x_i,y)-\psi(y)\big|\leq2\epsilon\big)\Big).$$
	
	We will show  that for big enough $n\gg n_{\phi,\epsilon}$, $\theta(x_1,\ldots,x_n)\in p^{(n)}$, and  the condition (ii) of Definition~\ref{continuous fim} holds.	 Indeed, for any Morley sequence $(a_i)$,  as $\lim tp(a_i/{\Bbb U})=p$, we have $|\lim\phi(a_i,b)-\psi(b)|\leq\epsilon$ for any $b\in\Bbb U$.
Notice that, for all $b$,  we have $|\phi(a_i,b)-\psi(b)|\leq|\phi(a_i,b)-\lim\phi(a_i,b)|+|\lim\phi(a_i,b)-\psi(b)|\leq|\phi(a_i,b)-\lim\phi(a_i,b)|+\epsilon$.
 Therefore, by Lemma~\ref{lemma}, 
 the number $i$ such that $|\phi(a_i,b)-\psi(b)|>2\epsilon$ is $\leq n_{\phi,\epsilon}$.
 This means that 
 $\models\theta(a_1,\ldots,a_n)$, and so $\theta(x_1,\ldots,x_n)\in p^{(n)}$. On the other hand, for any ${\bar c}=(c_1,\ldots,c_n)$ where $\models\theta(\bar c)$, we have $|Av(\bar c)(\phi(x,b))-p(\phi(x,b))|\leq|Av(\bar c)(\phi(x,b))-\psi(b)|+|p(\phi(x,b))-\psi(b)|\leq|Av(\bar c)(\phi(x,b))-\psi(b)|+\epsilon$ for all $b\in \Bbb U$. As $\models\theta(\bar c)$, it is easy to see that $\sup_{b\in\Bbb U}|Av(\bar c)(\phi(x,b)-\psi(b)|\to 2\epsilon$ as $n\to\infty$. Therefore, for big enough $n$, we have $\sup_{b\in\Bbb U}|Av(\bar c)(\phi(x,b))-p(\phi(x,b))|\leq4\epsilon$. As $\epsilon$ is arbitrary, the proof is completed.
\end{proof}

\section{Measures and random types} \label{3} 
In this section we study generic stability of measures through two procedures for transferring Keisler measures to types in a continuous structure, namely the randomization. These two procedures are called {\em natural extension} (cf. Subsection~3.2) and {\em corresponding random-type} (cf. Subsection~3.3), although they sometimes go by other names in other literature. Wherever necessary, we refer to the literature for more clarity.

 In Subsection 3.1, we first introduce and study the notion of {\em $R$-generic stability}, as a property of a measure in {\bf classical} logic, and then in Subsections 3.2 and 3.3,  using the results of Section~\ref{2},  we study  this notion and related random-types in the randomization (i.e. natural extension and corresponding random-type) and their connections. 

\subsection{$R$-generic stability} \label{} 
In this subsection we introduce the notions of {\em $R^{\text{type}}$/$R$-generically  stable type/measure} in {\bf classical} logic, and show that for constable theories: (i) $R^{\text{type}}$-generic stability and the usual notion of generic stability for types are equivalent, and (ii) $R$-generic stability and $fam$ are equivalent for measures.


In this section,  we fix a  (classical) first-order language $L$, a complete  countable  $L$-theory $T$ (not necessarily $NIP$), and a small set $A$ of the  monster model  $\cal U$. The space of global types in the variable $x$ is denoted by $S_x({\cal U})$ or $S({\cal U})$.

\medskip
We first give a notion/notation. Let $\mu(x)$ be a global measure, $r_1,\ldots,r_k\in[0,1]$ such that $\sum r_i=1$. The measure $\mu^{\sum r_i}$ is defined as follows: for any formula $\phi(x,y)$, and any  parameters $b_1,\ldots,b_k$, $$\mu^{\sum r_i}(\phi;(b_i)_1^k):=\sum_1^kr_i\cdotp\mu(\phi(x,b_i)).$$

The following notion is technical but we will shortly see that it is the natural generalization of the corresponding notion in the case of types.
\begin{Definition} \label{Baire-1/2-convergence}
	{\em   Let  $(\mu_n(x):n<\omega)$ be a sequence of global $A$-invariant measures, and $\mu(x)$ a global $A$-invariant measure. 
	
	\noindent
	(i): We say that $(\mu_n:n<\omega)$ {\em is randomly convergent} (or {\em    randomly converges}) if for any formula $\phi(x,y)$ and for each $r<s$, there are a natural number $N=N_{r,s}^\phi$ and a set $E\subset\{1,\ldots,N\}$ such that for any  $r_1,\ldots,r_k\in[0,1]$ with $\sum r_t=1$  and any  parameters $b_1,\ldots,b_k$ and  for  each $i_1<\cdots<i_{N}<\omega$, the following does not hold
		$$\bigwedge_{j\in E}\mu_{i_j}^{\sum r_t}(\phi;(b_t)_1^k)\leq r~\wedge~\bigwedge_{j\in N\setminus E}\mu_{i_j}^{\sum r_t}(\phi;(b_t)_1^k)\geq s. \ \ \  (*)$$
	
\noindent (ii): We say that $(\mu_n:n<\omega)$ randomly converges to $\mu$ if $(\mu_n:n<\omega)$
randomly converges and, additionally, $(\mu_n:n<\omega)$ converges to $\mu$ in the
space of measures.  }
\end{Definition}

\begin{Remark} \label{Baire 1/2 classic}
 In the above definition, if $(*)$ holds just for $k=1$, then we say that $(\mu_n:n<\omega)$ {\em is Baire-1/2-convergent} (or {\em     Baire-1/2 converges}). (Campare   to Definition~\ref{Baire-1/2-convergence 2} above.) A question arises:
	is every   Baire-1/2-convergent sequence randomly convergent?  
\end{Remark}

\begin{Definition} \label{generic-measure}
	(i) Let  $\mu(x)$ be a global  measure. 
	We say that $\mu$ is {\em $R$-generically stable} over   $A$ if $\mu$ is definable over $A$,  and  there is a sequence $(\mu_n:n<\omega)$ of  measures such that: 
	\newline 
	$\bullet$ $\mu_n=\frac{1}{k_n}\sum_{i=1}^{k_n}p_{n,i}$ where $a_{n,i}\models p_{n,i}$ and $a_{n,i}\in A$ (for all $n$), and
	\newline
	$\bullet$ $(\mu_n:n<\omega)$ randomly converges to $\mu$.
	\newline
	(ii) Let  $p(x)$ be a global type. 
	We say that $p$ is {\em  $R^{type}$-generically stable} over   $A$ if $p$ is definable over $A$, and  there is a sequence $(p_n:n<\omega)$ of types such that: 
	\newline 
	$\bullet$   $a_n\models p_n$ and $a_n\in A$ (for all $n$), and
	\newline
	$\bullet$ $(p_n:n<\omega)$ randomly converges to $p$.
\end{Definition}
\begin{Remark}
(1): Notice that for a type $p$, the conditions in (ii) implies the conditions in (i),   but  there is no reason for the converse to be true.  
\newline(2): On the other hand, we can make a more subtle distinction. Indeed, for any finite set $F$ of real numbers in $[0,1]$, we can give a generalization of Definition~\ref{generic-measure}(ii) as follows. Let  $\mu(x)$ be a global $F$-valued measure.  We say that $\mu$ is {\em $R^F$-generically stable} over   $A$ if $\mu$ is definable over $A$, and  there is a sequence $(\mu_n:n<\omega)$ of  $F$-valued measures such that: 
\newline 
$\bullet$   $\mu_n$ is a  $F$-valued measure of a convex combination of types realized in   $A$ (for all $n$), and
\newline
$\bullet$ $(\mu_n:n<\omega)$ randomly converges to $\mu$.
\newline
Although we will not study it in this article, we believe that for a fixed $F$, the latter notion and the usual notion of generically stable type are of the same nature and complexity.
\end{Remark}

First we show  that, for types, $R^{type}$-generic  stability  and the usual notion of generic stability coincide. The following is a new characterization of generic stability which is interesting in itself.
\begin{Fact} \label{generic-type}
	Let  $p(x)$ be a global $M$-invariant type.
	The following are equivalent:
	
	(i) $p$ is generically stable over $M$ (as in \cite{Pillay-Tanovic}).

	(ii) $p$ is $R^{type}$-generically stable over $M$ (as in Definition~\ref{generic-measure}).
\end{Fact}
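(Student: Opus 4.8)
The plan is to establish both implications by reducing $R^{type}$-generic stability to the convergence characterization of generic stability in Theorem~\ref{generic type continuous} (applied in classical logic via \cite{K-Morley}, or equivalently its continuous counterpart).

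For (i)~$\Longrightarrow$~(ii): assume $p$ is generically stable over $A$. Then $p$ is definable over $A$ (indeed over a small model), so the first clause of Definition~\ref{generic-measure}(ii) holds. For the second clause, pass to a separable/small model $M\supseteq A$ over which $p$ is finitely satisfiable and definable; by the classical analogue of Theorem~\ref{Morley sequence} (i.e. \cite[Thm.~2.8]{K-Morley}) there is a sequence $(c_i)$ in $M$ with $tp(c_i/\mathcal{U})\to p$, and by the classical analogue of Remark~\ref{remark1}/Remark~\ref{remark2}(iii) this sequence is in fact Baire-$1/2$-convergent (equivalently $DBSC$-convergent, since we are in classical logic). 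It remains to upgrade Baire-$1/2$-convergence to \emph{random} convergence, i.e. to control the convex combinations $\mu_{i_j}^{\sum r_t}$ in the condition $(*)$. Here $\mu_n := tp(c_n/\mathcal{U})$ is a $\{0,1\}$-valued type, so $\mu_n^{\sum r_t}(\phi;(b_t)_1^k) = \sum_{t}r_t \cdot \phi^{\mathcal{U}}(c_n, b_t)$ — a weighted average of $0$'s and $1$'s, depending on the pattern $(\phi(c_n,b_t))_{t\le k}$. The key point is that the truth patterns realized by the $c_n$'s on any finite tuple $(b_1,\dots,b_k)$ are, up to the $DBSC$-blocking, forced to stabilize uniformly: for a fixed formula $\phi(x,y)$ and $r<s$ the number $N = N^\phi_{r,s}$ from $DBSC$-convergence depends only on $\phi$ (and $r,s$), not on the parameters, and this uniformity is exactly what is needed to block the alternation $(*)$ simultaneously over all $k$, all weights $r_t$, and all parameters $b_t$. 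The cleanest route is to observe (as in Remark~\ref{Baire 1/2 classic}(ii)) that random convergence of $(\mu_n)$ is equivalent to Baire-$1/2$-convergence of every sequence $(\mu_n^{\sum r_t}:n<\omega)$ with a \emph{single} $N$, $E$, and then to check that a $DBSC$-blocking for $\phi$ on $(tp(c_n/\mathcal{U}))$ — which is a statement about finitely many $\phi$-instances and is preserved under taking convex combinations since the combination is monotone in each coordinate — yields such a uniform blocking for the averaged sequences. Concretely: if some $N$-alternation $(*)$ held for the averages at threshold $r<s$, one could extract, via pigeonhole on the $\le k$ truth values of $\phi(c_{i_j}, b_t)$ and a $\Delta$-system / Ramsey argument on the indices, an alternation of the raw truth values $\phi(c_{i_j}, b)$ with a larger-but-still-uniformly-bounded length, contradicting $DBSC$-convergence of $(tp(c_n/\mathcal{U}))$.

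For (ii)~$\Longrightarrow$~(i): assume $p$ is definable over $A$ and there is a sequence $(p_n)$ with $a_n\models p_n$, $a_n\in A$, and $(p_n)$ randomly converging to $p$. Random convergence implies Baire-$1/2$-convergence (the case $k=1$), so $(tp(a_n/\mathcal{U})) = (p_n)$ Baire-$1/2$-converges to $p$; in particular $tp(a_n/\mathcal{U})\to p$. Since $p$ is definable over a small model and finitely satisfiable in a small model (it is realized as a limit of types over $\mathcal{U}$ of points $a_n$ in the small set $A$, hence finitely satisfiable in $A$), by the classical analogue of Theorem~\ref{generic type continuous}(ii)$\Rightarrow$(i) — i.e. \cite[Thm.~4.4]{K-Morley} — $p$ is generically stable over $A$. (Alternatively, invoke Corollary~\ref{Grothendieck}'s classical analogue together with the fact that a Baire-$1/2$-convergent sequence has no order after passing to the appropriate model.)

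The main obstacle is the upgrade from Baire-$1/2$-convergence to random convergence in (i)~$\Longrightarrow$~(ii), i.e.\ handling the convex combinations and the quantification over \emph{all} parameter tuples $(b_1,\dots,b_k)$ uniformly. For types this should go through smoothly precisely because the underlying sequence $(tp(c_n/\mathcal{U}))$ is $\{0,1\}$-valued and $DBSC$-blocked — so the average $\mu_n^{\sum r_t}$ crosses a threshold $r<s$ only in a way dictated by finitely many raw truth values, and the blocking length is parameter-independent. The delicate bookkeeping is to name the correct enlarged blocking constant $N'$ (as a function of $N^\phi_{r,s}$ and the combinatorics of the thresholds $r,s$) and to run the Ramsey/$\Delta$-system extraction; I would state this as a self-contained combinatorial lemma and defer the routine verification. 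Everything else is a direct citation of the classical versions of the theorems proved in Section~\ref{2}.
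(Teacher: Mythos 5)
Your overall architecture coincides with the paper's: both directions are routed through the classical characterization of generic stability from \cite[Thm.~4.4]{K-Morley} (definability plus a $DBSC$-convergent sequence of realized types in $A$), and your treatment of (ii)$\Rightarrow$(i) --- random convergence trivially implies $DBSC$-convergence by restricting to $k=1$ --- is exactly what the paper does. The difference, and the place where your write-up has a genuine gap, is the upgrade from $DBSC$-convergence to random convergence in (i)$\Rightarrow$(ii). The mechanisms you propose do not obviously deliver a blocking constant independent of $k$: pigeonholing ``on the $\leq k$ truth values of $\phi(c_{i_j},b_t)$'' ranges over $2^k$ patterns, so any bound extracted that way a priori depends on $k$, whereas Definition~\ref{Baire-1/2-convergence} demands a single $N^\phi_{r,s}$ working for all $k$, all weights $r_t$, and all parameter tuples simultaneously; and the parenthetical claim that the $DBSC$-blocking is ``preserved under taking convex combinations since the combination is monotone in each coordinate'' is not right as stated, since the average $\sum_t r_t\,\phi(c_n,b_t)$ can fall below $r$ or above $s$ without any single coordinate realizing the blocked pattern. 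The Ramsey/$\Delta$-system extraction you defer is therefore not routine in the form you describe.

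The paper closes this step with a different and cleaner tool: by \cite[Lemma~2.8]{K-Baire}, $DBSC$-convergence of $(tp(a_n/\mathcal{U}))$ gives, for each $\phi$, a single $m$ with $\sum_{n}|\phi(a_n,b)-\phi(a_{n+1},b)|\leq m$ for \emph{all} $b$; the triangle inequality transfers the same bound $m$ to every convex combination $\sum_t r_t\,\phi(a_n,b_t)$, and since each alternation across the gap $[r,s]$ costs at least $|s-r|$ of variation, no alternating pattern of length $N>m/|s-r|$ can occur --- uniformly in $k$, the weights, and the parameters. Your pigeonhole idea can in fact be repaired into essentially this computation (for each consecutive pair in a putative alternation the total weight of the coordinates $t$ at which $\phi(\cdot,b_t)$ flips is at least $s-r$; summing over the pairs and exchanging the order of summation, some coordinate flips at least $(N-1)(s-r)$ times, contradicting the variation bound), but that is the bounded-variation argument in disguise, and it is the ingredient your sketch is missing. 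A minor further point: your detour through a model $M\supseteq A$ produces the witnessing sequence in $M$ rather than in $A$; \cite[Thm.~4.4]{K-Morley} as invoked in the paper yields the sequence in $A$ itself, which is what Definition~\ref{generic-measure}(ii) requires.
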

\begin{proof}
	We use a result from \cite{K-Morley}, namely Theorem~4.4. Using this theorem, $p$ is generically stable (in the usual sense)   if and only if   it is definable and  there is a sequence $(a_n)\in M$ such that $(tp(a_n/{\cal U}):n<\omega)$ $DBSC$-converges to $p$.\footnote{Cf. Remark~\ref{DBSC remark}, for definition of $DBSC$-convergent. Notice that for types Baire-1/2 convergent (as in Remark~\ref{Baire 1/2 classic}) and $DBSC$-convergent coincide.} It is easy to check that if  $(tp(a_n/{\cal U}):n<\omega)$ randomly converges, then it $DBSC$-converges. Conversely, if 
	$(tp(a_n/{\cal U}):n<\omega)$ $DBSC$-converges, then by Lemma~2.8 in \cite{K-Baire}, for any formula $\phi(x,y)$ there is a natural number $m$ such that for any parameters $b$,
	$$\sum_1^\infty|\phi(a_n,b)-\phi(a_{n+1},b)|\leq m.$$
	Now, by the triangle inequality, it is easy to check that for any  $r_1,\ldots,r_k\in[0,1]$ with $\sum r_i=1$ and parameters $(b_i)_1^k$, we have
	$$\sum_{n=1}^\infty\Big|\sum_{i=1}^k r_i\cdotp\phi(a_n,b_i)-\sum_{k=1}^k r_i\cdotp\phi(a_{n+1},b_i)\Big|\leq m. \ \ \ (\dagger)$$
(Notice that $(\dagger)$ is Fubini.) 
\newline
This implies that  $(tp(a_n/{\cal U}):n<\omega)$ randomly converges.
Indeed, suppose for a contradiction that there are parameters $(b_i)_1^k$, natural number $N>\frac{m}{|r-s|}$, $r_1,\ldots,r_k\in[0,1]$ with $\sum_1^k r_i=1$, and $j_1<\ldots<j_N<\omega$ such that $\mu_{j_t}^{\sum r_i}(\phi;(b_i)_1^k)\leq r$ if $t$ is odd, and $\mu_{j_t}^{\sum r_i}(\phi;(b_i)_1^k)\geq s$ if $t$ is even. (Here $\mu_{n}^{\sum r_i}(\phi;(b_i)_1^k)=\sum_{i=1}^k r_i\cdotp\phi(a_n,b_i)$.) Then
\begin{align*}
\sum_{n=1}^\infty\big|\mu_{n}^{\sum r_i}(\phi;(b_i)_1^k)-\mu_{n+1}^{\sum r_i}(\phi;(b_i)_1^k)\big| &  \geq\sum_{t=1}^N\big|\mu_{j_t}^{\sum r_i}(\phi;(b_i)_1^k)-\mu_{j_{t+1}}^{\sum r_i}(\phi;(b_i)_1^k)\big| \\
&  \geq  N\cdot |r-s| \\
& >m.
\end{align*} 
This contradicts $(\dagger)$.
\end{proof}

\subsection*{Representation of $fam$} 
We want to show that $R$-generic stability is a representation of $fam$. This  is useful in the next section.

First, we need to recall from \cite[Definition~3.1]{Gannon sequential} the notion of sequential approximation of measures. Let  $\mu(x)$ be a global  measure. We say that $\mu$ is {\em sequentially approximated  over   $A$} if  there is a sequence $(\mu_n:n<\omega)$ of  measures such that: 
\newline 
$\bullet$ $\mu_n=\frac{1}{k_n}\sum_{i=1}^{k_n}p_i$ where $a_i\models p_i$ and $a_i\in A$ (for all $n$), and
\newline
$\bullet$ $(\mu_n:n<\omega)$   converges to $\mu$. 
\newline
Notice that convergence is in the logic topology or equivalently the topology of pointwise convergence. (Cf. \cite{Gannon sequential}, Definition~2.1.) Gannon \cite[Pro. 3.4]{Gannon sequential} showed that, in countable theories,  every definable and sequentially approximated measure is $fam$.

\medskip
The following observation was suggested to us by the referee.
\begin{Proposition} \label{fam=>R-generic}  Let  $\mu(x)$ be a global   measure. If $\mu$ is  $fam$ over   $A$, then 
	$\mu$ is  $R$-generically stable over $A$.
\end{Proposition}
\begin{proof}
	If $\mu$ is $fam$ over $A$, then $\mu$ is definable over $A$. It suffices to show that the random convergence   condition. As $T$ is countable, enumerate formulas from $L$, $(\phi_i)_{i<\omega}$.  Choose a sequence $(\mu_i:i<\omega)$ such that $\mu_n$ is a measure concentrating on finitely many realized types (in $A$) and for each $i\leq n$ and $b\in {\cal U}^y$,
	$$\big|\mu(\phi_i(x,b))-\mu_n(\phi_i(x,b))\big|<\frac{1}{n}.\footnote{In fact, as $\mu$ and the $\mu_n$'s are definable over $A$, one can easily verify that   for each $i\leq n$ and  any $A$-finitely satisfied measure $\nu$, we have
		$\big|\mu\otimes\nu(\phi_i(x,y))-\mu_n\otimes\nu(\phi_i(x,y))\big|<\frac{1}{n}$. However, the argement presented  here proves a lesser claim and is preliminary, although it suffies for the purposes of this article.}$$
	(This is possible using a construction in \cite[Pro.~3.3(iii)]{Gannon sequential}.)
	\newline
	We show that the sequence $(\mu_n:i<\omega)$ randomly converges to $\mu$. For a contradiction, suppose that there is a formula $\phi(x,y)=\phi_m(x,y)$ and numbers $r<s$ such that for any $N=\{0,\ldots,N\}$ and $E\subseteq N$, there exist $\bar r=r_1,\ldots,r_n, \bar b=b_1,\ldots,b_n$ and $i_1<\cdots<i_n$ such that 
	$$\bigwedge_{j\in E}\mu_{i_j}^{\sum r_t}(\phi;\bar b)\leq r~\wedge~\bigwedge_{j\in N\setminus E}\mu_{i_j}^{\sum r_t}(\phi;\bar b)\geq s.$$
	Let $n_1>\frac{1}{(r-s)/3}$ and $n_2=m$ (the index of our formula). Let $N=\{1,\ldots,\max\{n_1,n_2\}+2\}$ and $E=\{\max\{n_1,n_2\}+2\}$ and $k=\max\{n_1,n_2\}+1$. Then notice that for $\bar r$ and $\bar b$ and $i_1<\cdots<i_N$, if $\nu=\sum_{t=1}^n r_t\delta_{b_t}$ then 
	\begin{align*}
\mu_{i_k}^{\sum r_t}(\phi,\bar b) &  =\mu_k\otimes\nu(\phi(x, y)) \\
	& \approx_{\epsilon/3} \mu\otimes\nu(\phi(x,y)) \\
	& \approx_{\epsilon/3} \mu_{k+1}\otimes\nu(\phi(x,y)) \\
		& = \mu_{i_{k+1}}^{\sum r_t}(\phi,\bar b).
	\end{align*}
	(Here $c\approx_{\epsilon/3}d$   if and only if   $|c-d|\leq\epsilon/3$.)
	\newline
	Hence, if $\mu_{i_{k+1}}^{\sum r_t}(\phi,\bar b)\leq r$, then  $\mu_{i_k}^{\sum r_t}(\phi,\bar b)\leq r+2\epsilon/3$ and so it is not greater than or equal to $s$. To summarize, $(\mu_n:i<\omega)$ randomly converges to $\mu$.
\end{proof}

\begin{Corollary} \label{fam=R-generic}  Let  $\mu(x)$ be a global   measure.  $\mu$ is   $R$-generically stable over   $A$ if and only if
 $\mu$ is  $fam$ over $A$.
\end{Corollary}
\begin{proof} By Proposition~\ref{fam=>R-generic} above, we just need to show left to right. This follows from Proposition 3.4 of \cite{Gannon sequential}. First notice that, as $T$ is countable and $\mu$ is definable,   we can assume that $A$ is countable. Clearly, by definition, $\mu$ is sequentially approximated  over   $A$. As $\mu$ is definable and  sequentially approximated, Proposition 3.4 of \cite{Gannon sequential} implies that  $\mu$ is  $fam$.
\end{proof}
However, $fam$ and $R$-generic stability are the same, the latter presentation is useful in the following.  In the next section we study connections between   $R$-generic stability(=$fam$) and random-types in randomization.

\medskip
Recall that the notion of generically stable measures for $NIP$ theories was introduced and studied  in \cite{HPS}. The  next observation shows  that the new and usual notions coincide,  in $NIP$ theories.
\begin{Corollary} \label{}
	(Assuming $T$ is $NIP$.) Let  $\mu$ be a global measure. Then $\mu$ is   $R$-generically stable over   $A$ (as in Definition~\ref{generic-measure}(i))   if and only if   it is  generically stable over   $A$ (as in \cite{HPS}).
\end{Corollary}
\begin{proof}
	Recall that, in $NIP$ theories, any measure is  $fam$ if and only if it is  generically stable. Therefore, this follows from Corollary~\ref{fam=R-generic}. (In particular, for types, $R$-generic stability and $R^{type}$-generic stability   are the same.)
\end{proof}

\begin{Remark} \label{fam not generic}   In \cite{K-Morley}, we make a   claim: generic stability (for  types) is strictly stronger than \textbf{sad}.  (Recall from \cite{Gannon sequential} that a type is called \textbf{sad} if it is both \textbf{s}equentially \textbf{a}pproximated and \textbf{d}efinable.\footnote{We believe that \textbf{sad} types are sad because they are not genericaly stable.} Notice that sequential approximation for types and measures are differently defined in Gannon's paper.) Although, in \cite{K-Morley} we suggested an example of a non-generic and \textbf{sad}  type,  we have not found clear examples yet. 
\end{Remark}

\subsection{Random types and $R$-generic stability}
A  randomization of a first-order structure $M$, as formalized by Ben~Yaaocv and Keisler \cite{Ben-Keisler}, is a new {\em continuous} structure  whose elements are random elements  of $M$. In this section we show that a measure $\mu$ (in classical logic) is  $R$-generically stable  (as in Definition~\ref{generic-measure}) if   a canonical random-type (i.e. the natural extension) {\bf behaves  like}  generically stable types in   continuous logic (as in Theorem~\ref{generic type continuous}). 

We assume familiarity with  the basic notions about randomization of classical structures/theories as developed in \cite{Ben-Keisler} and \cite{Ben-transfer}.
Although we will recall  some notions and results from the research note \cite{Ben-transfer} in the following, and since this note is informal, we also refer to \cite[Subsection 3.2]{CGH-generic} and \cite[Sections 1 and 2]{Gannon transfer} for greater clarity and precision whenever necessary.

 In the following $T$ is a classical theory in the language $L$ and $T^R$ its randomization in the language $L^R$, as a continuous theory.

\begin{Convention} In this section, the symbol $\otimes$  is not used for the Morley product of types/measures, but will be used in another sense.
\end{Convention}

\begin{Convention}
	In the rest of the article, whenever necessary, we write the parameters in {\bf continuous} logic (i.e. in ${\Bbb U}$) in bold letters ${\bf a}, {\bf b}, {\bf c}\ldots$. Otherwise, we use $a, b, c,\ldots$.
\end{Convention}

Let $M$ be a classical $L$-structure and $\cal A$ an atomless measure algebra. The $L^R$-pre-structure $(M\otimes{\cal A})_0$ is defined as follows. The domain consists of all formal
finite sums $\sum_{i<k} m_i\otimes e_i$, also written $\bar m \bar e$, where $m\in M$ and $\bar e
=(e_i)_{i<k}\subseteq {\cal A}$ is a partition of the identity. If $e'$ is any other event then one can easily refine the partition and we identify members of 
$(M\otimes{\cal A})_0$ with other members obtained by refinement of partition. In this case, it is easy to check that:
$$f(\bar a\otimes\bar e,\bar b\otimes\bar e,\ldots )=(f(a_i,b_i,\ldots))\otimes \bar e,$$
$$\big[P(\bar a\otimes\bar e,\bar b\otimes\bar e,\ldots)\big]=\bigvee\{e_i:P(a_i,b_i,\ldots)\}\in\cal A.$$
As the distance symbol interprets a metric on $(M\otimes{\cal A})_0$,
its completion is denoted by $M\otimes{\cal A}$. Notice that if $M\models T$ then $M\otimes{\cal A}\models T^R$. (In \cite{CGH-generic}, the construction $(M\otimes{\cal A})_0$ is  denoted by $M_0^{\Omega}$. Cf. Subsection 3.2 in there.)

\begin{Remark}
Notice that every element ${\bf a}=\sum_{i<k}m_i\otimes e_i$  of $(M\otimes{\cal A})_0$ is a map from $\cal A$ to $M$ as follows: for each $i<k$, ${\bf a}(t)=m_i$ if $t\in e_i$. (Cf. \cite{CGH-generic} or \cite{Gannon transfer}.) This means that $\bf a$ is a simple function, i.e. its range is finite. Therefore, we can assume every element  $m$ of $M$ belongs to $(M\otimes{\cal A})_0$ via  the map $m\mapsto m\otimes 1$ where $m\otimes 1$ is the constant map $t\mapsto m$.
\end{Remark}
\begin{Convention} \label{simple writing}
In the rest of the paper, sometimes we write $m$ instead of $m\otimes 1$ for simplicity. Therefore, for an $L$-formula $\varphi(x)$ and parameter $m\in M$, we can write ${\Bbb P}[\varphi(m)]$ instead of ${\Bbb P}[\varphi(m\otimes 1)]$.
\end{Convention}

Let $\mu(\bar x)$ be a measure over $M$. Then, it is a random type over $M$, but not over  
$M\otimes{\cal A}$. Although there is a natural extension  $\mu\otimes{\cal A}$ of $\mu$ over $M\otimes{\cal A}$. For every $L$-formula $\phi$ we define 
$${\Bbb P}\Big[\phi\Big(\bar x,\sum m_ie_i\Big)\Big]^{\mu\otimes{\cal A}}=\sum{\Bbb P}[e_i]{\Bbb P}^\mu[\phi(\bar x, m_i)].$$
As this is only defined for formulae over the parameter set $(M\otimes{\cal A})_0$,  it can be  extended by
continuity to the whole structure. (Cf. Lemma~3.8 in  \cite{CGH-generic} for an exact statement of this point.) The type $\mu\otimes{\cal A}$ is called the {\em natural extension of $\mu$.} In \cite[Fact 3.10]{CGH-generic} and \cite[Def. 1.3]{Gannon transfer}, assuming that $\mu$ is definable, this type is expanded on the monster model $\Bbb U$ and is denoted by $r_\mu$. (Notice that in the present paper, we defined $\mu\otimes{\cal A}$ as a type on models of the form $M\otimes{\cal A}$ but not $\Bbb U$. This is an important point and is discussed in Remark \ref{finite satisfiable} below.)

\begin{Fact} \label{Ben-fact}
	Let $\cal U$ be the monster model of $T$, $\mu$ a global measure, and $\cal A$ an atomless measure algebra. Suppose that $M$ is a separable model of $T$ and  ${\cal A}_0\preceq{\cal A}$ is separable.
\newline
(i) $M\otimes{\cal A}_0\preceq{\cal U}\otimes {\cal A}$.
\newline
(ii) $\mu$ is definable over $M$ if and only if the type $\mu\otimes {\cal A}$ is definable over $M$.
\newline
(iii) $\mu$ is finitely satisfied in $M$  if and only if the type $\mu\otimes {\cal A}$ is finitely
satisfied in $M\otimes{\cal A}_0$.
\end{Fact}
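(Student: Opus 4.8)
This is Proposition~1.1 of \cite{Ben-transfer}; the proof I would give runs as follows. The whole weight is on part~(i): once $M\otimes{\cal A}_0\preceq{\cal U}\otimes{\cal A}$ is in hand, parts (ii) and (iii) drop out by unwinding the explicit description of $\mu\otimes{\cal A}$ on step parameters $\sum_i m_i\otimes e_i$ given above and specializing the event to $e=1$ for the easy directions.

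For (i), the plan is to run the Tarski--Vaught test for continuous logic, using as key inputs (1) the density, in the metric of ${\cal U}\otimes{\cal A}$ (resp.\ $M\otimes{\cal A}$), of the \emph{step elements} $\sum_{i<k} c_i\otimes e_i$ with $c_i$ in the classical model and $(e_i)_{i<k}$ a finite partition of $1$, and (2) the analysis of $T^R$ in \cite{Ben-Keisler}, by which the value of an $L^R$-condition at a step element is, up to any prescribed $\epsilon$, a fixed continuous combination of the quantities $\mathbb{E}[\psi_\ell]$ for finitely many $L$-formulas $\psi_\ell$, with these equal to $\sum_{i,j}\mathbb{P}[e_i\wedge f_j]\cdot\chi\big[{\cal U}\models\psi_\ell(c_i,d_j)\big]$ at aligned step elements. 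So, given $\phi(\bar x,\bar b)$ with $\bar b=\sum_j d_j\otimes f_j$, $d_j\in M$, $f_j\in{\cal A}_0$, and $\epsilon>0$: I would take $\bar a^\ast\in{\cal U}\otimes{\cal A}$ nearly realizing the relevant value of $\phi$, approximate it by a step element $\sum_i c_i\otimes e_i$ ($c_i\in{\cal U}$, $e_i\in{\cal A}$), and refine so that $\{e_i\}\cup\{f_j\}$ generates a single finite subalgebra. Then two non-interfering moves: (a) each $c_i$ meets over $M$ only finitely many relevant $L(M)$-formulas, so by $M\preceq{\cal U}$ it can be replaced by some $c_i'\in M$ realizing the same truth values on those, leaving the value of $\phi$ on the step element unchanged; (b) since ${\cal A}_0$ is atomless (elementarily inside the atomless ${\cal A}$) and carries the restricted measure, I can split each fixed $f_j\in{\cal A}_0$ inside ${\cal A}_0$ into pieces $c_{ij}$ with $\mathbb{P}[c_{ij}]=\mathbb{P}[e_i\wedge f_j]$ and put $e_i'=\bigvee_j c_{ij}\in{\cal A}_0$, reproducing all the relevant joint measures. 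Then $\sum_i c_i'\otimes e_i'$ lies in $M\otimes{\cal A}_0$ and realizes $\phi(\bar x,\bar b)$ to within $O(\epsilon)$; this gives Tarski--Vaught, and $M\otimes{\cal A}_0$ is genuinely a substructure of ${\cal U}\otimes{\cal A}$ since it is metrically complete and closed under the step-element operations.

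For (ii), $(\Leftarrow)$ is immediate, since specializing to $e=1$ gives $\mathbb{P}[\phi(\bar x,m)]^{\mu\otimes{\cal A}}=\mathbb{P}^{\mu}[\phi(\bar x,m)]$, so an $L^R(M)$-definition of $\bar q\mapsto\mathbb{P}[\phi(\bar x,\bar q)]^{\mu\otimes{\cal A}}$ restricts to an $L(M)$-definition of $b\mapsto\mu(\phi(x,b))$; for $(\Rightarrow)$, if $\mu$ is definable over $M$ then $b\mapsto\mu(\phi(x,b))$ is given by an $L(M)$-formula, and substituting into $\mathbb{P}[\phi(\bar x,\sum_i m_i e_i)]^{\mu\otimes{\cal A}}=\sum_i\mathbb{P}[e_i]\,\mu(\phi(x,m_i))$ displays the map on step parameters as a uniformly continuous $L^R(M)$-formula, which extends by continuity to all parameters from ${\cal U}\otimes{\cal A}$, correct over $M$ by (i). For (iii): if $\mu$ is finitely satisfiable in $M$, I would, given an $L^R$-condition over $M\otimes{\cal A}$, build a witness $\sum_i a_i\otimes e_i$ by choosing $a_i\in M$ to satisfy the relevant instances of $L$-formulas carrying positive local $\mu$-mass (possible exactly because $\mu$ is finitely satisfiable in $M$) and $e_i\in{\cal A}_0$ to match the relevant probabilities, so the witness lies in $M\otimes{\cal A}_0$; conversely, restricting a witness to $e=1$ and reading off the support recovers finite satisfiability of $\mu$ in $M$.

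The step I expect to be the main obstacle is the simultaneous approximation in (i): arranging that only finitely many $L$-formulas and finitely many Boolean atoms are relevant, then carrying out moves (a) and (b) in sequence on the common refinement without either spoiling the other, and keeping care throughout that one works in the completions $M\otimes{\cal A}_0$ and ${\cal U}\otimes{\cal A}$ rather than in the pre-structures.
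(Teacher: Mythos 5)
The paper does not actually prove this statement: it is imported verbatim as Proposition~1.1 of \cite{Ben-transfer}, so there is no in-paper argument to compare yours against, and your sketch has to be judged on its own merits. So judged, it follows the expected route (density of the step elements, reduction of $L^R$-conditions at step elements to continuous combinations of the expectations $\mathbb{E}[\psi_\ell]$, and atomless splitting of events), and parts (i), (ii) and the backward direction of (iii) are essentially correct. For (i) you could in fact skip Tarski--Vaught entirely: $M\otimes{\cal A}_0$ is a model of the complete theory $T^R$ and a substructure of ${\cal U}\otimes{\cal A}$, and $T^R$ admits quantifier elimination (\cite[Thm.~2.9]{Ben-Keisler}), so the inclusion is automatically elementary; your moves (a) and (b) are then only needed at the quantifier-free level, where they work as you describe precisely because in the Tarski--Vaught setting the parameter events $f_j$ already lie in ${\cal A}_0$.

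The one genuine soft spot is the forward direction of (iii). There the parameter $\bar{\bf b}=\sum_j d_j\otimes f_j$ ranges over $({\cal U}\otimes{\cal A})_0$, so the events $f_j$ lie in ${\cal A}$ but in general not in ${\cal A}_0$, and ``choosing $e_i\in{\cal A}_0$ to match the relevant probabilities'' means producing a partition $(e_S)$ of $1$ \emph{inside} ${\cal A}_0$ with a prescribed joint distribution against the external partition $(f_j)$: one wants $\Pp[e_S\wedge f_j]=\mu(\theta_S)\cdot\Pp[f_j]$, where $\theta_S$ runs over the $\phi$-types over $\{d_j\}$ of positive $\mu$-mass, each realized in $M$ by finite satisfiability of $\mu$. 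Atomlessness of ${\cal A}_0$ alone does not obviously deliver this, since independence from $(f_j)$ is not an elementary condition with parameters in ${\cal A}_0$; one needs, for instance, Lyapunov's convexity theorem applied to the vector measure $e\mapsto(\Pp[e\wedge f_j])_j$ on ${\cal A}_0$ (whose components are nonatomic, being absolutely continuous with respect to $\Pp\upharpoonright{\cal A}_0$), or an $\epsilon$-approximate substitute via conditional expectations --- which suffices, since only approximate finite satisfiability is claimed. Once that splitting is supplied, your computation closes the argument.
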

\begin{proof} 
This is proved in Proposition 1.1 of \cite{Ben-transfer}.
For an accurate  proof of (ii), see Fact~3.10 and Remark~3.11 of \cite{CGH-generic}.
\end{proof}

\begin{Remark}\label{finite satisfiable}
 There is an important point regarding Fact~\ref{Ben-fact}(iii), and that is: In general, the model $\cal U\otimes\cal A$ is a small model of $T^R$, because this model is often not sufficiently saturated. (This means that ${\cal U}\otimes{\cal A}\prec{\Bbb U}$ but ${\cal U}\otimes{\cal A}\neq{\Bbb U}$. See Remark~\ref{warning}(ii), for an example.)
  Therefore,  $\mu\otimes{\cal A}$ is a  type on ${\cal U}\otimes\cal A$ but not a type on the monster model $\Bbb U$ of $T^R$.  Thus, if $\mu$ is finitely satisfiable then so is $\mu\otimes\cal A$, but the type $\mu\upharpoonright^{\Bbb U}$ on $\Bbb U$ (as defined in Subsection 3.3 below) may not be finitely satisfiable.
 In \cite[Def. 1.4]{Gannon transfer}, the type $\mu\upharpoonright^{\Bbb U}$ is denoted by $s_\mu$, and in Proposition~2.2 in there, it is shown that for a definable measure $\mu$, $s_\mu=r_\mu$.
 
 In Proposition 3.26 of \cite{CGH-generic}, 
 the authors introduced a type $q$ that is definable and finitely satisfiable, but $r_q$ is not finitely satisfiable. This does not contradict Fact \ref{Ben-fact}(iii), because $r_q$ is a  definable global type (i.e. over $\Bbb U$), and its restriction to ${\cal U}\otimes\cal A$ becomes $q\otimes {\cal A}$.
 Notice that, by Fact  \ref{Ben-fact}(iii), $q\otimes\cal A$ is finitely satisfiable and    has a finitely satisfiable extension to a global type (over $\Bbb U$), which we denote by $R_q$. Clearly $R_q\neq r_q$ and $R_q$ is not definable.
 To summarise, $R_q$ is a finitely satisfiable extension of $q\otimes\cal A$ and $r_q$ is a definable extension of $q\otimes\cal A$.\footnote{Thanks to James Hanson for clarifying this point for us.}

\end{Remark}

The observation below is nothing more than translating random-types to Keisler measures, which, considering the previous discussions, is enlightening and can lead to further results. (Cf. Remarks~\ref{weak reverse} and \ref{warning}.)
\begin{Proposition}  \label{main}
	Let $T$ be a (countable) classical theory, $M$ a small model of $T$, and $\mu(x)$ a global $M$-invariant measure.
	 Let ${\cal A}$ be an atomless measure algebra such that $[0,1]\preceq{\cal A}$. Then (i)~$\Rightarrow$~(ii):
	\newline
	(i) $\mu$ is definable over $M$ and  there is a sequence $({\bf a}_n)\in M\otimes [0,1]$ such that
for any $L$-formula $\phi(x,y)$ and every ${\bf b}\in{\cal U}\otimes\cal A$ the sequence 
 ${\Bbb P}[(\phi({\bf a}_n,{\bf b})]:n<\omega)$ Baire-1/2-converges to ${\Bbb P}[\phi(x,{\bf b})]^{\mu\otimes{\cal A}}$.
 \newline
	(ii)  $\mu$ is $R$-generically stable ($fam$) over $M$ (as in Definition~\ref{generic-measure}).
\end{Proposition}
\begin{proof}
	We can assume that $M$ is separable. Let $M_0\subseteq M$ be dense and countable, and ${\cal A}_0\subseteq[0,1]$ be dense and countable. By Fact~\ref{Ben-fact}(ii), $\mu$ is  definable over $M$ if and only if  $\mu\otimes{\cal A}$ is    definable over $M\otimes [0,1]$. The point of referencing to Fact~\ref{Ben-fact} is to be able to assume definability when proving  (1) implies (2) below:

	\medskip\noindent 
	(1): There is a sequence $({\bf a}_n)\in M_0\otimes {\cal A}_0$ such that for any $L$-formula $\phi(x,y)$
and every ${\bf b}\in{\cal U}\otimes\cal A$ the sequence  $({\Bbb P}[\phi({\bf a}_n,{\bf b})]:n<\omega)$ Baire-1/2-converges to ${\Bbb P}[\phi(x,{\bf b})]^{\mu\otimes\cal A}$.
\newline
(2): There is a sequence $(\mu_n=\frac{1}{k_n}\sum_{i=1}^{k_n}p_{n,i} :n<\omega)$, where $a_{n,i}\models p_{n,i}$ and $a_{n,i}\in M_0$ (for all $n$ and $i$), such that $(\mu_n:n<\omega)$  randomly converges to $\mu$.

\medskip\noindent 
(1) $\Rightarrow$ (2): Assume that (1) holds. Then, by Definition \ref{Baire-1/2-convergence 2}, for any $\phi(x,y)\in L$, and for each $r<s$, there is a natural number $N=N_{r,s}^{{\Bbb P}[\phi]}$ and a set $E\subseteq\{1,\ldots,N\}$ such that for each $i_1<\cdots<i_N<\omega$, and any parameter ${\bf b}\in{\cal U}\otimes\cal A$, the following does not hold:
		$$\bigwedge_{j\in E}{\Bbb P}[\phi({\bf a}_{i_j},{\bf b})]\leq r ~\wedge~\bigwedge_{j\in N\setminus E}{\Bbb P}[\phi({\bf a}_{i_j},{\bf b})]\geq s.  \ \ \ \Diamond$$ 
Notice that every ${\bf a}_n$ is of the form in $\sum_{i\leq k_n} a_{n,i}\otimes e_{n,i}$ where $a_{n,1},\ldots,a_{n,k_n}\in M_0$ and $e_{n,1},\ldots  e_{n,k_n}$ is a measurable partition of ${\cal A}_0$ (such that the measure of $e_{n,i}$ is $\frac{1}{k_n}$ for all $i\leq k_n$). Set $\mu_n:=\sum_{i\leq k_n} a_{n,i}$. That is, $\mu_n(\phi(x,b))=\frac{1}{k_n}\sum_{i\leq k_n} \phi(a_{n,i},b)$ for all $b\in\cal U$.  Let ${\bf b}\in({\cal U}\otimes{\cal A})_0$ be arbitrary. By  the construction, $\bf b$ is of the form in  $\sum_{j\leq m} b_j\otimes e_j$ where $b_1,\ldots,b_m\in\cal U$ and $e_1,\ldots  e_m$ is a measurable partition of $\cal A$. With  the following computation we check that the condition (2) holds:
\begin{align*}
{\Bbb P}[\phi({\bf a}_n,{\bf b})]  &  = {\Bbb P}[\phi({\bf a}_n,\sum_{j\leq m} b_j  e_j)]= \sum_j{\Bbb P}[e_j]\cdot{\Bbb P}[\phi({\bf a}_n, b_j)] \\
& =  \sum_j{\Bbb P}[e_j]\cdot{\Bbb P}[\phi(\sum_{i\leq k_n} a_{n,i}e_{n,i}, b_j)] \\
& =  \sum_j{\Bbb P}[e_j]\cdotp\Big(\sum_i{\Bbb P}[e_{n,i}]\cdotp\phi(a_{n,i},b_j) \Big)  \\
& =  \sum_j\frac{1}{m}\cdotp\Big(\sum_i\frac{1}{k_n}\cdotp\phi(a_{n,i},b_j) \Big) \ \ \ (*)  
\end{align*}
(Recall from Convention~\ref{simple writing} that in the above we wrote $a_{n,i}$ and $b_j$ instead of $a_{n,i}\otimes1$ and $ b_j\otimes1$.)
 
Now, define the Keisler measure $\mu_n:=\frac{1}{k_n}\sum_{i=1}^{k_n}p_{n,i}$, where $a_{n,i}\models p_{n,i}$  ($i\leq k_n$). By ($*$), we have 
\begin{align*}
{\Bbb P}[\phi({\bf a}_n,{\bf b})]  &  = \sum_j\frac{1}{m}\cdotp\Big(\sum_i\frac{1}{k_n}\cdotp\phi(a_i,b_j) \Big) \\
& = \mu_n^{\sum r_j}(\phi;(b_j)_j^{m}) \ \ \ \text{ where } r_j=\frac{1}{m} \text{ for any } j\leq m
\end{align*} 
(Cf. Subsection~3.1 for  the notation $\mu^{\sum}$.)

Therefore, by $\Diamond$, for each $i_1<\cdots<i_N<\omega$, and any parameter $b_1,\ldots,b_m\in M_0$, the following does not hold:
$$\bigwedge_{j\in E} \mu_{i_j}^{\sum r_t}(\phi;(b_t)_t^{m})\leq r ~\wedge~\bigwedge_{j\in N\setminus E}\mu_{i_j}^{\sum r_t}(\phi;(b_t)_t^{m})\geq s.$$ 
\noindent
This means that  the sequence $(\mu_n=\frac{1}{k_n}\sum_{i=1}^{k_n}p_{n,i} :n<\omega)$   randomly converges. It easy to see that its limit is $\mu$ and was left to the reader. 
As $({\cal U}\otimes{\cal A})_0$ is metrically dense in ${\cal U}\otimes{\cal A}$, the proof is completed.
\end{proof}
\medskip
It seems that we showed something stronger than (i) $\Rightarrow$ (ii), since in the argument the ${\bf a}_n$'s are in $M_0\otimes{\cal A}_0$. Indeed, notice that for each $n$, we can replace ${\bf c}_n\in M\otimes [0,1]$ with ${\bf a}_n\in M_0\otimes {\cal A}_0$ such  that $d({\bf c}_n,{\bf a}_n)<\frac{1}{n}$. Now, $({\bf c}_n)$ Baire-1/2 converges   if and only if   $({\bf a}_n)$ Baire-1/2 converges. Therefore,  (i) $\Rightarrow$ (ii) is exactly equivalent to what we proved.
\medskip

\begin{Remark} \label{weak reverse}
	In general, the direction (ii)~$\Rightarrow$~(i) in Proposition~\ref{main} does not hold. 
(Cf. Remark~\ref{warning} below for some explanation, and Example~\ref{example 2}.) However, it is still possible to prove something weaker. Suppose that (ii) holds. That is, there is a sequence $(\mu_n=\frac{1}{k_n}\sum_{i=1}^{k_n}p_{n,i} :n<\omega)$, where $a_{n,i}\models p_{n,i}$ and $a_{n,i}\in M_0$ (for all $n$ and $i$), such that $(\mu_n:n<\omega)$  randomly converges to $\mu$. Let ${\cal N}={\cal U}\otimes({\cal A}\times[0,1))$ be the model presented in  \cite[Fact~2.3]{Gannon transfer} which is denoted by ${\cal U}^{\Omega\times[0,1)}$ in there. Define ${\bf a}_n\in\cal N$ as follows: ${\bf a}_n(t,s)=a_{n,i}$ if $s\in[\frac{i-1}{k_n},\frac{i}{k_n})$. Then, it is easy to see that for any $L$-formula $\phi(x,y)$ and every ${\bf b}\in{\cal U}\otimes\cal A$ the sequence 
	${\Bbb P}[(\phi({\bf a}_n,{\bf b})]:n<\omega)$ Baire-1/2-converges to ${\Bbb P}[\phi(x,{\bf b})]^{\mu\otimes{\cal A}}$. Obviously, this result is weaker than the condition (i), because the  ${\bf a}_n$'s are in $\cal N$ and may not be in ${\cal U}\otimes\cal A$.

On the other hand, similar to the argument of Proposition \ref{main}, one can show that if the corresponding random-type $\mu\upharpoonright^{\Bbb U}$ (as defined in the next subsection) is generically stable over $M\otimes\cal A$, then $\mu$ is $fam$ over $M$. (Cf. Theorem~\ref{fim=generic} below.)
\end{Remark}

\begin{Remark} \label{warning}
(i)  As mentioned above, in general, the direction (ii)~$\Rightarrow$~(i) of Proposition~\ref{main} does not hold, and so one can  not  expect that a combination of Proposition~\ref{main} and Theorem~\ref{generic type continuous} implies that a measure $\mu$ (in classical logic)  is $R$-generically stable if and only if ``its randomization" is generically stable (in continuous logic). The reasons are that: (1) In general,  ${\cal U}\otimes\cal A$ is not 
the monster model $\Bbb U$ of $T^R$. 
(See the following example by James Hanson on (non-)saturation of models of the form $M\otimes\mathcal{A}$.) 
  And (2) the randomization of average Keisler measures (as defined in the next subsection) may not be realized in the monster model $\Bbb U$. This is related to the fact that the direction (ii)~$\Rightarrow$~(i) in Proposition~\ref{main} does not hold in general. (See Remark~\ref{weak reverse} above and also Remark~3.24 in \cite{CGH-generic}.) 
	\newline
	(ii)  James Hanson pointed out to us that   randomizations of the form $M\otimes\mathcal{A}$ are not sufficiently saturated in general.\footnote{Although, probably simultaneously (and of course independently) we and Hanson realized that ${\cal U}\otimes\mathcal{A}$ could not be the monster model, it was Hanson who made this clear.}  For an easy example, let  $T$ be $DLO$ with constants added for $\Bbb Q$. Consider the Lebesgue measure on $[0,1]$ and let $p$ be the corresponding type in $T^R$. No model of $T^R$ of the form $M\otimes\mathcal{A}$ can realize $p$, because for any element $\bf a$ of such a model, $tp(\bf a)$ corresponds to a measure in $T$ with atoms. (And this is in the completion, not just the pre-model. Any type corresponding to a measure with finite support has distance $1$ from $p$ in $T^R$'s type space.) He also suggested a characterization of a theory $T$ such that  models of $T^R$ of the form $M\otimes\mathcal{A}$ can be arbitrarily saturated: a proper subclasses of stable theories.
\end{Remark}


\subsection{Global random-types and $NIP$}
In this subsection we first  review the notion of corresponding random-type $\mu\upharpoonright^{\Bbb U}$ of  a global measure $\mu$ (in classical logic),\footnote{This type is called {\em extension by definition} in \cite{Ben-transfer}.} and then study this notion and its connection with generic stability of measures in general/$NIP$ theories.
The content of this subsection is mostly descriptive, and much of it can be found in certain  articles. However, it is useful in that it clarifies certain concepts and highlights the relationships between them.


\begin{Definition} Let $T$ be a classical theory, $M\models T$ and $A\subseteq M$. 
Let $\mu(x)$ be a measure over $M$.
In the following, we assume at least one of the two cases:  (1) $\mu(x)$ is  {\em definable   over $A$}, 
 or (2) $T$ is $NIP$ and $\mu(x)$ is  {\em finitely satisfiable in $A$}. 

  Therefore, for any formula $\phi(x,y)$, there is a Borel  function $f_\mu^\phi:S_y(A)\to[0,1]$ such that  ${\Bbb P}^\mu[\phi(x,b)]=f_\mu^\phi  (q)$ where $q=tp(b/A)$.\footnote{Note that in case (1), the function $f_\mu^\phi$ is continuous, and in case (2), based on a result in \cite{HP}, it is Borel measurable.}

If $\Bbb M$ is a model of $T^R$ containing $M$ (e.g. ${\Bbb M}\succeq M\otimes\mathcal{A}$ or $\Bbb M=\Bbb U$ the monster model of $T^R$), there is an extension  
$\mu\upharpoonright^{\Bbb M}$ of $\mu$ over $\Bbb M$ as follows:
$${\Bbb P}[\phi(x,{\bf b})]^{\mu\upharpoonright^{\Bbb M}}=\int f_\mu^\phi(q) \ d \ tp({\bf b}/A),$$
where  $tp({\bf b}/A)$ is a Borel probability measure on $S_y(A)$. In  this paper, the type $\mu\upharpoonright^{\Bbb M}$ is called the {\em corresponding random-type of $\mu$}. In  \cite[Def. 1.4]{Gannon transfer}, this type  is denoted by $s_\mu$, and in Proposition~2.2 in there, it is shown that for a definable measure $\mu$, we have $s_\mu=r_\mu$. 
\end{Definition}

\begin{Remark}\label{Borel definable}
(i) If $\mu$ is definable, by Proposition 2.2 in \cite{Gannon transfer} and Fact 3.10 in \cite{CGH-generic}, $s_\mu$ is well-defined and consistent.
If $\mu$ is finitely satisfiable and $T$ is $NIP$, by Proposition 2.4 in \cite{Gannon transfer}, $s_\mu$ is well-defined and consistent.
\newline
(ii) In addition, if $\mu$ is {\em finitely satisfiable in $A$}, then $\mu\upharpoonright^{\Bbb M}$ can be defined as follows:
$${\Bbb P}[\phi(x,{\bf b})]^{\mu\upharpoonright^{\Bbb M}}=\int f_\mu^\phi(q) \ d \ tp_{\phi^*}({\bf b}/A),$$
where  $f_\mu^\phi:q\mapsto {\Bbb P}^\mu[\phi(x,b)]$ with $q=tp_{\phi^*}(b/A)$, is the defining function of $\mu$ on $S_{\phi^*}(A)$, and $tp_{\phi^*}({\bf b}/A)$ is a Borel probability measure on $S_{\phi^*}(A)$. (In this case, we say that the definition of $\mu$ is factors via $S_{\phi^*}(A)$. That is, ${\Bbb P}^\mu[\phi(x,b)]={\Bbb P}^\mu[\phi(x,b')]$   if and only if   $tp_{\phi^*}(b/A)=tp_{\phi^*}(b'/A)$.)
\newline
(iii) Notice that if $\Bbb M=M\otimes \cal A$, then $\mu\upharpoonright^{\Bbb M}=\mu\otimes \cal A$ (as in the previous section). This follows from Proposition 2.2 of \cite{Gannon transfer}.
\newline
\end{Remark}


  


\begin{Fact}  \label{Ben-fact-2}
 (Assuming $NIP$) 	Let $M$ be a model, $\mu$  a global measure,   and  ${\cal A}\prec[0,1]$ an atomless probability algebra.   If $\mu$  is finitely satisfiable in $M$,  then $\mu\upharpoonright^{\Bbb U}$ is well-defined, consistent and approximately finitely satisfiable in $M\otimes{\cal A}$.
\end{Fact}
\begin{proof}
This follows from \cite[Prop. 2.4]{Gannon transfer}.
\footnote{This fact was first proved in Proposition 2.1(ii) of \cite{Ben-transfer}  using a different method.} 
\end{proof}

  In \cite{K-GC}, using a crucial result due to Bourgain, Fremlin, and Talagrand, we gave an   alternative argument of \cite[Thm. 5.10]{Gannon sequential}. (See  Theorem~A.1 in \cite{K-GC}.) In fact,   \cite[Thm.~A.1]{K-GC} is a  refinement of \cite[Thm. 5.10]{Gannon sequential}.  
The following is another   argument. 
\begin{Theorem}  \label{Gannon-alternative-2}
 (Assuming $NIP$) 	Let $T$ be a countable theory and $M$ a countable model of $T$. Then, every global measure $\mu(x)$ which is finitely satisfiable in $M$ is the limit of a sequence of average types realised in $M$, that is, there is a sequence $(\bar{a}_n)\in M^{<\omega}$ such that for every formula $\phi(x,y)$,  $$\lim_{n\to\infty} \text{Av}(\bar{a}_n)(\phi(x,b))=\mu(\phi(x,b)) \   \text{ for all } b\in\cal U.$$
\end{Theorem}
\begin{proof}
Let ${\cal A}\prec[0,1]$ be any atomless probability algebra. 
As $T$ has $NIP$,  the corresponding random type  $\mu\upharpoonright^{\Bbb U}$ is finitely satisfiable in $M\otimes {\cal A}$ by Fact~\ref{Ben-fact-2}. 
 Again, as $T$ is  $NIP$, by \cite[Thm.  5.3]{Ben-VC}, $T^R$ is $NIP$. Therefore, some/any Morley sequence of  $\mu\upharpoonright^{\Bbb U}$ (over  $M\otimes {\cal A}$) is convergent.
  By Theorem~\ref{Morley sequence}, there is a sequence in $M\otimes {\cal A}$ which is (Baire-1/2) convergent to  $\mu\upharpoonright^{\Bbb U}$. Now, by a translation similar to the argument of Proposition~\ref{main}, it is easy to see  that there is a sequence of averages measures of realized types in $M$ which converges to $\mu$. This is enough. (Furthermore, such a  sequence is randomly convergent.)
\end{proof}

\begin{Fact}  \label{main completion}  (Assuming $NIP$)
	Let $T$ be a (countable) classical theory, $M$ a small model  of $T$, and $\mu(x)$ a global  measure which is definable over $M$.
	Let ${\cal A}$ be an atomless measure algebra such that $[0,1]\preceq{\cal A}$.  Then (i)~$\Longrightarrow$~(ii).
	\newline
	(i) $\mu$ is $R$-generically stable over $M$.
	\newline
	(ii)  
$\mu\upharpoonright^{\Bbb U}$ is generically stable over $M\otimes [0,1]$.
\end{Fact}
\begin{proof} First note that as $\mu$ is $fam$ over $M$, $\mu\upharpoonright^{\Bbb U}$ is definable over, and finitely satisfiable in $M\otimes[0,1]$. (Cf. Fact \ref{Ben-fact-2}.) As $T^R$ is $NIP$, some/any Morley sequence of $\mu\upharpoonright^{\Bbb U}$ is convergent. By Theorem~\ref{Morley sequence}, there is a sequence $({\bf a}_n:n<\omega)$ in $M\otimes[0,1]$ such that $\lim tp({\bf a}_n/{\Bbb U})=\mu\upharpoonright^{\Bbb U}$. Furthermore, by Theorem~\ref{generic type continuous}, this measure is generically stable.
\end{proof}

\begin{Remark} Notice that,  
assuming $NIP$, Fact~\ref{main completion} provides a reverse to Proposition~\ref{main}. Indeed, by Theorem \ref{generic type continuous}, the condition (ii) of Fact~\ref{main completion} imples the condition (i) of Proposition~\ref{main}.
\end{Remark}

\medskip
The following is  a kind of complement to  Proposition~\ref{main} for $\mu\upharpoonright^{\Bbb U}$.
\begin{Theorem} \label{fim=generic}
	Let $T$ be a (countable)  theory, $M$ a model of $T$, and   $\mu(x)$  a global measure (in $T$) which is definable over $M$. Suppose that $\mu\upharpoonright^{\Bbb U}$ is generically stable over $M\otimes\mathcal{A}$. Then,  $\mu$ is $R$-generically stable ($fam$) over $M$.
\end{Theorem}
\begin{proof}
First notice that, as $\mu$ is definable, by \cite[Prop. 2.2]{Gannon transfer}, $\mu\upharpoonright^{\Bbb U}$ is well-defined and consistent. Furthermore, recall that $\mu\upharpoonright^{{\cal U}\otimes \cal A}=\mu\otimes\cal A$.
 By Theorem~\ref{generic type continuous},  there is a sequence $({\bf a}_n)\in M\otimes [0,1]$ such that the sequence $(tp({\bf a}_n/{\Bbb U}) :n<\omega)$ Baire-1/2-converges to $\mu\upharpoonright^{\Bbb U}$. In particular,  the sequence $(tp({\bf a}_n/{\cal U}\otimes{\cal A}):n<\omega)$ Baire-1/2-converges to $\mu\otimes{\cal A}$. Therefore, by  Proposition~\ref{main},  $\mu$ is $R$-generically stable over $M$.
\end{proof}
The following example shows that one can not expect a converse to Theorem~\ref{fim=generic}.
\begin{Example} \label{example 2}   Consider the unique complete type $p\supset\{\neg xEb:b\in{\cal U}\}$ in the Henson graph. Conant and Gannon \cite{CG} showed that this type is $fam$ but not $fim$. Using the methods of the present paper it is easy to see  that randomization of $fam$ measures are $fam$ again. (This has been proven in \cite[Thm 3.25]{CGH-generic} with a different  approach than here.)
Also, in Corollary~\ref{type-fim=fim}, we will show that randomization of generically stable {\bf types} are generically stable. (See \cite[Corollary 3.19]{CGH-generic} for a different proof.)
To summarize, the Dirac measure  $\delta_p$ is $fam$ and its corresponding random-type  $\delta_p\upharpoonright^{\Bbb U}$ is not generically stable. 
\end{Example}

\section{Continuous $VC$-theory and generically stable types}
In this section, using powerful tools provided in \cite{Ben-VC}, we
can refine some of the previous results.
We  give an argument of the fact that the generic stability for {\bf types} is preserved in randomization (cf. Corollary~\ref{type-fim=fim}).

\medskip
We first recall some notion and notation from \cite{Ben-VC}. 
Let $I$ be a set, and $Q\subseteq[0,1]^I$  a collection of functions from $I$ to $[0,1]$.  For any $r<s$, there is a collection $Q_{r,s}=\{q_{r,s}:q\in Q\}$ of fuzzy sets, where the $q_{r,s}$'s are defined as follows: $i\in q_{r,s}$ if $q(i)\leq r$ and  $i\notin q_{r,s}$ if $q(i)\geq s$; and it is not  known/important that $i$ belongs to $q_{r,s}$ or not if $r<g(i)<s$.
The $VC$-index for  $Q_{r,s}$, denoted by $VC(Q_{r,s})$, is defined similar to classical case. (We refer to \cite[page 316]{Ben-VC} for  a precise definition of fuzzy sets and the $VC$-indexes of $Q$.)
$Q$ is called a $VC$-class if for any $r<s$, $VC(Q_{r,s})<\infty$. 
If $Q$ is a $VC$-class, one can easily show that for every $\epsilon>0$ there is an  upper bound $d_\epsilon<\infty$ for the $VC$-indexes of classes $Q_{r,r+\epsilon}$ where $r\in[0,1-\epsilon]$.  
To summarize, the notion of a $VC$-class is such that we have the following important observations/connections:
\begin{Observation}   \label{observation}
	Let   $J$ be a set, and $\varphi:{\Bbb N}\times J\to[0,1]$ a function. Then, (i)~$\iff$~(ii)~$\Rightarrow$~(iii).
	\newline (i): $\varphi^J=\{\varphi(\cdot,j):{\Bbb N}\to[0,1] \ | \ j\in  J\}$ is a $VC$-class.
	\newline (ii): $\varphi_{\Bbb N}=\{\varphi(i,\cdot):J\to[0,1] \ | \ i\in {\Bbb N}\}$ is dependent. That is,   for every $\epsilon>0$ there is a natural number $N=N_\epsilon$ such that for every $r<s$ with $s-r=\epsilon$ the following does not hold  $$(*) \ \ \ \exists F\subseteq {\Bbb N}, |F|=N \ \forall E\subseteq F \  \exists j\in J \ \Big(\bigwedge_{i\in E}\varphi(i,j)\leq r \ \wedge \ \bigwedge_{i\in F\setminus E}\varphi(i,j)\geq s \Big).$$
(iii) There is a subsequence $(\varphi_n:n<\omega)$ of $\varphi_{\Bbb N}$ such that $(\varphi_n:n<\omega)$ Baire-1/2-converges. That is, for each $r<s$, there is a natural number $N=N_{r,s}$ and a set $E\subset\{1,\ldots,N\}$ such that  for  each $i_1<\cdots<i_{N}<\omega$,  the following does not hold
$$\exists j\in J \ \Big(\bigwedge_{k\in E}\varphi(i_k,j)\leq r~\wedge~\bigwedge_{k\in N\setminus E}\varphi(i_k,j)\geq s\Big).$$ 
\end{Observation}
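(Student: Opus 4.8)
The equivalence (i)~$\iff$~(ii) is the real content, and it is essentially a restatement of the continuous analogue of the Sauer--Shelah dichotomy together with the standard duality between a $VC$-class and its dual. The plan is: first fix $\epsilon>0$ and $r<s$ with $s-r=\epsilon$, and observe that the fuzzy set system $(\varphi^J)_{r,s}$ on the index set $\Bbb N$ is shattered by a finite set $F\subseteq\Bbb N$ precisely when the displayed condition $(*)$ holds for that $F$, because for $E\subseteq F$ the function $\varphi(\cdot,j)$ ``picks out'' $E$ inside $F$ (in the $r,s$ fuzzy sense) exactly when $\varphi(i,j)\le r$ for $i\in E$ and $\varphi(i,j)\ge s$ for $i\in F\setminus E$. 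Hence $VC\big((\varphi^J)_{r,s}\big)<\infty$ (i.e.\ (i) at level $r,s$) says there is a bound $N$ on the size of shattered $F$, which is precisely the negation of ``$(*)$ holds for arbitrarily large $N$'', i.e.\ (ii) at level $\epsilon$. The passage between ``$VC$-index of $\varphi^J$ finite'' and ``$VC$-index of the dual class $\varphi_{\Bbb N}$ finite'' is the usual dual-shattering bound (a dual of a class of $VC$-dimension $d$ has $VC$-dimension $<2^{d+1}$), which works verbatim for fuzzy sets; this is exactly what is recorded on \cite[page~316]{Ben-VC}, so I would simply cite it. I would then note that the uniform-in-$r$ bound $d_\epsilon$ (already observed in the paragraph preceding the statement) is what lets us phrase (ii) with a single $N=N_\epsilon$ working for all $r<s$ with $s-r=\epsilon$.

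For (ii)~$\Rightarrow$~(iii), the plan is a Ramsey-plus-diagonalization argument of exactly the type used in Lemma~\ref{Key lemma} and Theorem~\ref{Morley sequence}. Working with the countable set of pairs $(r,s)$ of rationals in $[0,1]$, for each such pair colour increasing tuples from $\Bbb N$ of the appropriate length by the ``shattering pattern'' they realize against $J$ (there are only finitely many patterns on a tuple of fixed length, since each pattern is a subset $E$ of the index positions together with the datum of whether it is realized by some $j\in J$); apply Ramsey's theorem to extract a monochromatic infinite subsequence, and diagonalize over the countably many pairs $(r,s)$ to get one subsequence $(\varphi_n:n<\omega)$ that is homogeneous for all of them simultaneously. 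On this subsequence, for each $(r,s)$ the hypothesis (ii) --- specifically the failure of $(*)$ for $N=N_{s-r}$ --- forces that some fixed $N$-position pattern $E$ is \emph{not} realized by any $j\in J$, and by homogeneity the same $E$ and $N$ then witness the displayed non-realizability condition in (iii) for every increasing $N$-tuple. That is the definition of Baire-$1/2$-convergence of the subsequence.

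The main obstacle --- or at least the one point requiring care --- is the bookkeeping in the diagonalization: one must make sure that the single $N_{r,s}$ produced in (iii) can be taken to depend only on $s-r$ (so that the statement matches the form of Definition~\ref{Baire-1/2-convergence 2}), which is where the uniform $VC$-bound $d_\epsilon$ is genuinely used rather than just finiteness of each individual $VC$-index. A secondary subtlety is that ``fuzzy sets'' means the value of $\varphi(i,j)$ in the gap $(r,s)$ is irrelevant, so when colouring tuples for Ramsey one should only record the pattern on positions where the value is definitely $\le r$ or definitely $\ge s$; but since (iii) only asserts non-realizability of a \emph{clean} pattern (all $\le r$ or all $\ge s$), this causes no trouble, and I would remark on it in one line. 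Everything else is routine and parallels the diagonal arguments already invoked in Section~\ref{2}.
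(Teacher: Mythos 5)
Your proposal is correct and follows essentially the same route as the paper: (i)~$\iff$~(ii) via the standard (fuzzy) $VC$-duality, and (ii)~$\Rightarrow$~(iii) by a Ramsey extraction of a homogeneous subsequence on which the failure of $(*)$ forces a uniformly blocked pattern. The paper phrases the second step as a contradiction (a non-Baire-$1/2$-convergent, Ramsey-indiscernible subsequence would satisfy $(*)$ for arbitrary $N$, contradicting (ii)), but the combinatorial content is identical to your direct construction-plus-diagonalization.
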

\begin{proof}
The equivalence (i)~$\iff$~(ii) is folklore in classical logic (cf. \cite{van}). The argument is a  straightforward adaptation of the classical case.

(ii)~$\Rightarrow$~(iii) is similar to the argument of the direction (i)~$\Rightarrow$~(iii) of Proposition~2.14 in \cite{K-Baire}. Indeed, suppose for a contradiction that there is no Baire-1/2-convergent subsequence. Using Ramsey theorem, let $(\varphi_n:n<\omega)$ be a   $\varphi$-$N$-$A$-indiscernible sequenence as in Definition~3.1 of \cite{K-classification}. (Here $N$ is a natural number  and $\{r,s\}=A$.) For suitable   $N$  and $A$, the condition $(*)$ above holds for $(\varphi_n:n<\omega)$, because this sequenece is not  Baire-1/2-convergent.   As $N$ is arbitrary, this contradicts (ii).
\end{proof}
\begin{Observation} \label{observation 2}
With the above notation, every Baire-1/2-convergent sequence  $\{\varphi(n,\cdot):n\in{\Bbb N}\}$   is dependent (as in (ii) in Observation~\ref{observation}). 
\end{Observation}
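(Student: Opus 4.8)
The plan is to reduce Observation~\ref{observation 2} to the already-established implication (ii)~$\Rightarrow$~(i)~$\iff$~(ii) chain in Observation~\ref{observation}, read in the contrapositive. More precisely, I want to argue: if $\{\varphi(n,\cdot):n\in\Bbb N\}$ is \emph{not} dependent in the sense of \ref{observation}(ii), then it cannot be Baire-1/2-convergent. So suppose $\varphi_{\Bbb N}$ is not dependent. Then there is some $\epsilon>0$ such that for every natural number $N$ there exist $r<s$ with $s-r=\epsilon$ and a finite set $F\subseteq\Bbb N$ with $|F|=N$ that is ``shattered up to $(r,s)$'', i.e. $(*)$ holds for $F$.

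First I would observe that since there are only finitely many dyadic-type thresholds to worry about once $\epsilon$ is fixed — or, more carefully, by a pigeonhole/compactness argument on the pair $(r,s)\in[0,1-\epsilon]\times[\epsilon,1]$ — we can extract a single pair $r<s$ (with $s-r$ bounded below, after shrinking $\epsilon$ slightly if needed) such that arbitrarily large finite sets $F$ are shattered with respect to this \emph{fixed} $(r,s)$. This is the step where I would be a little careful: the naive statement only gives, for each $N$, some $(r_N,s_N)$; to pin down one pair one passes to a subsequence of the $N$'s along which $(r_N,s_N)$ converges, and then uses that shattering with parameters $(r_N,s_N)$ for $r_N\to r$, $s_N\to s$ with $s-r\geq\epsilon$ implies shattering (with a slightly worse gap) at a fixed rational pair strictly inside $(r,s)$. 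Once we have arbitrarily large $F$ shattered at a fixed $(r,s)$, it is immediate that no natural number $N=N_{r,s}$ and no $E\subseteq\{1,\dots,N\}$ can serve in the Baire-1/2-convergence condition: take $F$ with $|F|\geq N$, enumerate its first $N$ elements as $i_1<\cdots<i_N$, and apply $(*)$ to the subset $E'=\{i_k:k\in E\}$ of $F$ to get a $j\in J$ witnessing exactly the forbidden configuration. Hence $\{\varphi(n,\cdot):n\in\Bbb N\}$ is not Baire-1/2-convergent, which is the contrapositive of what we want.

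Alternatively — and this may be the cleaner write-up — I would simply chain the known facts: by the equivalence (i)~$\iff$~(ii) of Observation~\ref{observation}, $\varphi_{\Bbb N}$ dependent is the same as $\varphi^J$ being a $VC$-class, and it is classical (and noted in \cite{Ben-VC}, \cite{K-Baire}) that a \emph{single} sequence failing dependence produces, via Ramsey, a $\varphi$-$N$-$A$-indiscernible sequence realizing $(*)$ for every $N$; but such a sequence is patently not Baire-1/2-convergent (the witness $j$ for each $N$ and each alternation pattern $E$ defeats any candidate $N_{r,s}$, $E$), and conversely any sequence that is Baire-1/2-convergent passes, with the same $N$, to a dependent family. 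Either route works; the honest content is just that Baire-1/2-convergence of $(\varphi_n)$ gives a uniform bound $N_{r,s}$ on alternations, and a uniform alternation bound for every $r<s$ with a given gap is exactly (a reformulation of) dependence of $\varphi_{\Bbb N}$.

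The main obstacle I anticipate is the \emph{uniformity in the pair $(r,s)$}: dependence as defined in \ref{observation}(ii) quantifies $N_\epsilon$ uniformly over all $r<s$ with $s-r=\epsilon$, whereas the Baire-1/2-convergence hypothesis on a fixed sequence only directly gives, for each individual pair $r<s$, some $N_{r,s}$. One must therefore upgrade ``$N_{r,s}$ finite for each $r<s$'' to ``$N_\epsilon$ finite, uniform over all gaps $\epsilon$''. This is handled exactly as in the $VC$-class literature: if $Q$ (here $\varphi^J$) is a $VC$-class then for every $\epsilon>0$ there is a single $d_\epsilon<\infty$ bounding $VC(Q_{r,r+\epsilon})$ over all $r\in[0,1-\epsilon]$ — this is the observation already invoked in the paragraph preceding Observation~\ref{observation}. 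So the cleanest proof is: Baire-1/2-convergence $\Rightarrow$ (via \ref{observation}, contrapositive) $\varphi_{\Bbb N}$ has no non-dependent subpattern $\Rightarrow$ $\varphi^J$ is a $VC$-class $\Rightarrow$ the uniform bounds $d_\epsilon$ exist $\Rightarrow$ $\varphi_{\Bbb N}$ is dependent. I would present it in that order, keeping the extraction-of-a-fixed-pair argument in reserve as the concrete mechanism behind the contrapositive of \ref{observation}.
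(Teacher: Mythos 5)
Your proposal is correct and matches the paper, whose entire proof is the single word ``Immediate'': the content is exactly that a shattered set of size $N$ at a pair $(r,s)$ realizes \emph{every} alternation pattern $E$ on its increasing enumeration, so the one forbidden pattern from Baire-$1/2$-convergence rules out shattering. You are also right that the only genuinely non-tautological point is upgrading the per-pair bounds $N_{r,s}$ to a bound $N_\epsilon$ uniform over all $r<s$ with $s-r=\epsilon$, and your monotonicity-of-shattering-in-$(r,s)$ plus finite-covering (or subsequence-extraction) argument settles it correctly.
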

\begin{proof}
Immediate.
\end{proof}
Compare to   Definitions~\ref{Baire-1/2-convergence 2},  \ref{Baire-1/2-convergence}, and Remark~\ref{Baire 1/2 classic}.


\medskip
Fix a probability space $(\Omega,{\frak B},\mu)$. Given sets $I$ and $J$, a family of $[0,1]$-valued functions on $I\times J$ is given as $\varphi:\Omega\to[0,1]^{I\times J}$ by $\omega\mapsto\varphi_\omega(\cdot,\cdot)$. 
For every $\omega$, $(\varphi_\omega)_I=\{(\varphi_\omega)_i:i\in I\}=\{\varphi_\omega(i,\cdot):i\in I\}$ and similarly  $(\varphi_\omega)^J=\{(\varphi_\omega)^j:j\in J\}$.
The family $\varphi=\{\varphi_\omega:\omega\in\Omega\}$ is called {\em uniformly  dependent} if for every $\epsilon>0$ there is   $d=d_{\varphi,\epsilon}$ such that $VC\big(((\varphi_\omega)^J)_{{r,r+\epsilon}}\big)\leq d$ for every $r\in[0,1-\epsilon]$ and $\omega\in\Omega$.

We say that  $\varphi=\{\varphi_\omega:\omega\in\Omega\}$ is a measurable family  if for any $(i,j)\in I\times J$, the function $\omega\mapsto\varphi_\omega(i,j)$ is measurable. Then, we define function ${\Bbb P}[\varphi]:I\times J\to[0,1]$ by  ${\Bbb P}[\varphi](i,j):={\Bbb P}[\varphi(i,j)]$.

We say that ${\Bbb P}[\varphi]:I\times J\to[0,1]$ is dependent, if for every $r<s$, the collection  $({\Bbb P}[\varphi]^J)_{r,s}$ is a $VC$-class. (Cf. \cite{Ben-VC}, page 316 and Proposition~2.15.)

\medskip
\noindent
{\bf Explanation.} We explain how ${\Bbb P}[\phi]$ will be interpreted in the next theorem (\ref{Ben local}). Let $\varphi(x,y)$ be an $L$-formula and $A,B$ subsets of a model ${\bf M}\models T^R$.  Recall from the construction of randomization that the universe of every model $\bf M$ of $T^R$ is a set of measurable  maps from an atomless probability algebra $\Omega$ to a model $M$ of $T$. This means that for each ${\bf a}\in\bf M$ and every $\omega\in\Omega$, we have ${\bf a}(\omega)\in  M$.

 Let us enumerate $A=\{{\bf a}_i:i\in I\}$ and $B=\{{\bf b}_j:j\in J\}$. Set $\psi(x,y)={\Bbb P}[\varphi(x,y)]$, i.e it is the corresponding formula of $\varphi$ in randomization. Define $\chi_\varphi:I\times J\to [0,1]$ via $\chi_\varphi(i,j):=\psi({\bf a}_i,{\bf b}_j)$. Therefore,
\begin{align*}
{\Bbb P}[\chi_\varphi](i,j) & ={\Bbb P}[\chi_\varphi(i,j)]=\psi({\bf a}_i,{\bf b}_j) \\ &
={\Bbb P}[\varphi({\bf a}_i,{\bf b}_j)]={\Bbb  P}\big(\{\omega\in\Omega:\models \varphi({\bf a}_i(\omega),{\bf b}_j(\omega))\}\big).
\end{align*}
In particular, recall that $M$ is a subset of $\bf M$ via $a\mapsto a\otimes 1$, where $a\otimes 1$ is the constant map $\omega\mapsto a$. In this case, as mentioned in Convention~\ref{simple writing}, to simplify the notation, we continue to use $a$ instead of $a\otimes 1$.

\medskip
The key result is the following.
\begin{Fact}[\cite{Ben-VC}, Corollary~4.2]  \label{Ben key}  
	If  $\varphi=\{\varphi_\omega:\omega\in\Omega\}$ is a measurable family of uniformly dependent functions, then 
${\Bbb P}[\varphi]:I\times J\to[0,1]$ is dependent.
\end{Fact}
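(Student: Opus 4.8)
This is \cite[Corollary~4.2]{Ben-VC}, and the route I would take realizes $\mathbb{P}[\varphi]$ as a uniform limit of finite averages of the fibres $\varphi_\omega$ and transfers uniform dependence along that limit. By Fact~\ref{dual VC} it is enough to show that $\mathbb{P}[\varphi]^J$ is a $VC$-class, and by Observation~\ref{observation} together with the Sauer--Shelah lemma for fuzzy sets this reduces to bounding, for each $\epsilon>0$, the $VC$-indices of the cuts $(\mathbb{P}[\varphi]^J)_{r,r+3\epsilon}$ ($r\in[0,1-3\epsilon]$) by a quantity depending only on $\epsilon$ and on the uniform bound $d_{\varphi,\epsilon}$ for the fibres $(\varphi_\omega)^J$.

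The plan has three steps. \emph{Step 1:} a finite average $\bar g=\tfrac1m\sum_{l\le m}g_l$ of functions whose fibre classes $(g_l)^J$ all have $VC$-index $\le d$ at every gap again has $\bar g^{J}$ a $VC$-class, with index bounded in terms of $m,d,\epsilon$; this is a Sauer--Shelah count, since on any finite $A\subseteq I$ each $g_l$ realizes at most polynomially-in-$|A|$ many $\delta$-coarse sign patterns (uniformly in $l$), hence so does $\bar g$ at scale $\epsilon$ once $\delta$ is small compared to $\epsilon/m$, and the converse Sauer--Shelah lemma then bounds the $VC$-index. \emph{Step 2:} because $\varphi=\{\varphi_\omega\}$ is measurable and \emph{uniformly} dependent, for each $\delta>0$ there are $m=m(\delta,\{d_{\varphi,\epsilon}\})$, \emph{independent of $I$ and $J$}, and $(\omega_1,\dots,\omega_m)\in\Omega^m$ with $\big|\tfrac1m\sum_{l\le m}\varphi_{\omega_l}(i,j)-\mathbb{P}[\varphi](i,j)\big|\le\delta$ for all $i\in I$ and $j\in J$ simultaneously; this is the substantive input of \cite{Ben-VC}, the ``random'' form of the Vapnik--Chervonenkis/Talagrand uniform Glivenko--Cantelli theorem, whose point is that the relevant $VC$-complexity is controlled by the $d_{\varphi,\epsilon}$'s alone rather than by the (a priori huge) index sets. \emph{Step 3:} $VC$-classness is preserved under sup-norm limits with a common index bound --- if $\|h_n-h\|_\infty<\epsilon$ and every $(h_n)^J$ has $VC$-index $\le D$ at gap $\epsilon$, then $h^J$ has $VC$-index $\le D$ at gap $3\epsilon$. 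Applying Step~3 to $h_n=\tfrac1{m_n}\sum_l\varphi_{\omega_l^{(n)}}$ with $\delta=1/n$, using Step~1 to bound the index of each $h_n$ and Step~2 for the convergence, gives the desired bound on $(\mathbb{P}[\varphi]^J)_{r,r+3\epsilon}$, i.e. $\mathbb{P}[\varphi]$ is dependent.

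The hard step is Step~2. A naive union bound over the parameters $j$ forces the sample size $m$ to grow with the number of parameters used, which is useless for the limit argument; and one cannot simply invoke the ordinary continuous $VC$-theorem, because the family $\{\omega\mapsto\varphi_\omega(i,j):(i,j)\in I\times J\}$ of functions on $(\Omega,\mu)$ need not be a $VC$-class (already the ``binary digit'' family shows this, yet its expectation is constant, hence dependent). It is precisely the measurability of $\omega\mapsto\varphi_\omega$ combined with the \emph{uniformity} of the hypothesis --- a common $VC$-bound $d_{\varphi,\epsilon}$ for \emph{all} fibres $\varphi_\omega$ --- that lets one run a $VC$-type concentration estimate for the ``random class'' $\varphi$ itself and obtain $m$ depending only on $\delta$ and the $d_{\varphi,\epsilon}$'s; Steps~1 and~3 are then routine Sauer--Shelah and $\epsilon$-pushing.
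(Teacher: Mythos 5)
First, a remark on the comparison you were asked to make: the paper does not prove this statement at all --- it is imported verbatim as \cite[Corollary~4.2]{Ben-VC} --- so there is no in-paper argument to measure yours against, and your proposal has to stand on its own. It does not, because the step you yourself single out as the crux is not merely left unproved: it is false. Step~2 asserts that for every $\delta>0$ there are $m=m(\delta,\{d_{\varphi,\epsilon}\})$ and $\omega_1,\dots,\omega_m$ with $\sup_{(i,j)\in I\times J}\bigl|\tfrac1m\sum_{l}\varphi_{\omega_l}(i,j)-{\Bbb P}[\varphi](i,j)\bigr|\le\delta$. Take $\Omega=[0,1]$ with Lebesgue measure $\lambda$, let $J$ be a singleton $\{*\}$, let $I$ be the collection of all Borel subsets of $[0,1]$, and put $\varphi_\omega(S,*)=1$ if $\omega\in S$ and $0$ otherwise. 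This family is measurable, and it is uniformly dependent for trivial reasons: since $|J|=1$, each $(\varphi_\omega)^J$ is a one-element class of fuzzy subsets of $I$, which cannot shatter even a singleton, so $d_{\varphi,\epsilon}\le 1$ for all $\epsilon$ and all $\omega$. Here ${\Bbb P}[\varphi](S,*)=\lambda(S)$ is again a one-element class, hence dependent, so the Fact holds. But for any finite multiset $P=\{\omega_1,\dots,\omega_m\}$ the average $\tfrac1m\sum_l\varphi_{\omega_l}(S,*)=|P\cap S|/m$ has sup-distance exactly $1$ from ${\Bbb P}[\varphi]$ (test $S=P$ and $S=[0,1]\setminus P$). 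No finite average of fibres approximates ${\Bbb P}[\varphi]$ uniformly at all, let alone with $m$ controlled by $\delta$ and the fibrewise $VC$ bounds.

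The diagnosis is the one your own ``binary digit'' remark points at but does not push far enough: sup-norm approximability of ${\Bbb P}[\varphi]$ by empirical averages is governed by the complexity of the dual family $\{\omega\mapsto\varphi_\omega(i,j):(i,j)\in I\times J\}$ of functions on $\Omega$, and uniform dependence of the fibres places no constraint whatsoever on that dual family (in the example it is the class of all Borel sets). The failure is not just that random sampling plus a union bound breaks down; there is no deterministic choice of samples either. Consequently the architecture of your proposal --- Steps~1 and~3, which are fine routine $VC$ facts, hanging off Step~2 --- cannot be repaired. Since uniform approximants need not exist, any correct proof of the Fact (in particular Ben~Yaacov's) must work locally: one bounds the shatter function of $({\Bbb P}[\varphi]^J)_{r,s}$ directly on a putative shattered finite set, using Markov's inequality to pass from inequalities on expectations to positive-measure sets of fibres, Sauer--Shelah fibrewise to control how many coarse patterns each $\varphi_\omega$ can realize there, and then a counting/integration argument over $\Omega$. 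If you want a self-contained proof you need to engage with that mechanism rather than route through a uniform law of large numbers over $\Omega$.
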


The following localizes Theorem~5.3 of \cite{Ben-VC} in two way: for a formula and   a sequence. (We strongly suggest that the proof of   \cite[Thm. 5.3]{Ben-VC} should be  read before reading the rest of this section.)
\begin{Theorem} \label{Ben local}
Let $M$ be a  model of $T$,   $\psi(\bar x,\bar y)$ a formula with $|\bar x|=n$, $|\bar y|=m$, and $(a_i)$ a sequence in $M$ of $n$-tuples. If the sequence  $(\psi(a_i,\bar y):i<\omega)$ $DBSC$-converges, then the sequence $({\Bbb P}[\psi(a_i,\bar y)]:i<\omega)$ Baire-1/2-converges. (Here ${\Bbb P}[\psi(\bar x,\bar y)]$ is the corresponding formula in $T^R$.)
\end{Theorem}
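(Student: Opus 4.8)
The plan is to reduce the statement to the key $VC$-theoretic machinery recalled just above, namely Observation~\ref{observation}, Observation~\ref{observation 2}, and Fact~\ref{Ben key}. First I would unpack what the hypothesis gives us: if $(\psi(a_i,\bar y):i<\omega)$ is $DBSC$-convergent (equivalently Baire-1/2-convergent in the $\{0,1\}$-valued sense, by Remark~\ref{DBSC remark}), then by Observation~\ref{observation 2} the family $\{\psi(a_i,\bar y):i<\omega\}$ is dependent in the sense of Observation~\ref{observation}(ii); by the equivalence (i)~$\iff$~(ii) there, the dual family $\psi^{\bar y}=\{\psi(a_i,\cdot)\}$ — or more precisely the relevant collection of functions on the index set — is a $VC$-class, and in fact a \emph{uniformly} bounded one: for each $\epsilon>0$ there is a single $d_\epsilon$ bounding $VC$ of the $\epsilon$-fuzzifications. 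This uniform $VC$-bound is exactly the input needed for the randomization step.

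Next I would set up the randomization side as in the proof of \cite[Thm.~5.3]{Ben-VC}. An element $\bf b$ of the randomized parameter sort is, up to approximation, a random variable $\omega\mapsto b(\omega)$ valued in the $\bar y$-sort of $M$ (over a probability space $(\Omega,\mathfrak B,\mu)$), and the truth value ${\Bbb P}[\psi(a_i,\bf b)]$ is computed as $\int_\Omega \psi(a_i,b(\omega))\,d\mu(\omega)$, i.e. as the $\mu$-integral of the $\{0,1\}$-valued function $\omega\mapsto \psi(a_i,b(\omega))$. So the family indexed by $i<\omega$ and by $\bf b$ is precisely of the form ${\Bbb P}[\varphi]$ for the measurable family $\varphi_\omega(i,\bf b)=\psi(a_i,b(\omega))$. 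The uniform $VC$-bound from the previous paragraph says exactly that $\varphi=\{\varphi_\omega:\omega\in\Omega\}$ is a measurable family of \emph{uniformly dependent} functions (each $\varphi_\omega$ ranges over instances of the $VC$-class $\psi^{\bar y}$, with the same $d_\epsilon$ for all $\omega$). Fact~\ref{Ben key} (\cite[Cor.~4.2]{Ben-VC}) then yields that ${\Bbb P}[\varphi]$ is dependent, i.e. for every $r<s$ the collection $({\Bbb P}[\psi(a_i,\bar y)]^{\bar y})_{r,s}$ is a $VC$-class, which is the same as saying $\{{\Bbb P}[\psi(a_i,\bar y)]:i<\omega\}$ is dependent in the sense of Observation~\ref{observation}(ii).

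Finally, to upgrade ``dependent'' to ``Baire-1/2-convergent,'' I would invoke Observation~\ref{observation}, direction (ii)~$\Rightarrow$~(iii): a dependent family contains a Baire-1/2-convergent subsequence. But here I must be careful — the conclusion of the theorem is that the \emph{whole} sequence $({\Bbb P}[\psi(a_i,\bar y)]:i<\omega)$ Baire-1/2-converges, not merely a subsequence. The fix is that we already know the original sequence $(\psi(a_i,\bar y):i<\omega)$ converges pointwise (in the type-space topology, since it $DBSC$-converges to a limit type), hence for each fixed $\bf b$, by dominated convergence, ${\Bbb P}[\psi(a_i,\bf b)]=\int \psi(a_i,b(\omega))\,d\mu$ converges as $i\to\infty$; so $({\Bbb P}[\psi(a_i,\bar y)]:i<\omega)$ is a pointwise-convergent \emph{and} dependent sequence, and a pointwise-convergent dependent sequence is automatically Baire-1/2-convergent (its set of ``bad'' alternating patterns is bounded uniformly in the parameter by the $VC$-bound, and one argues as in Observation~\ref{observation} applied to the convergent sequence itself rather than passing to a subsequence — this is the same phenomenon as the passage Baire-1/2 $\Rightarrow$ dependent $\Rightarrow$ Baire-1/2 for already-convergent sequences).

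The main obstacle I anticipate is precisely this last point: making rigorous that the uniform $VC$-bound transfers from the classical family to the randomized family with the \emph{same} $\epsilon$-dependence (so that Fact~\ref{Ben key} applies verbatim), and then that dependence plus genuine convergence gives Baire-1/2-convergence of the full sequence with no loss. Handling the approximation of an arbitrary $\bf b\in\Bbb U$ by finite-support random variables $\bar b\otimes\bar e$ — so that the integral representation is exact in the relevant continuous sense — is the routine but slightly delicate bookkeeping that the proof of \cite[Thm.~5.3]{Ben-VC} already carries out, and I would cite it rather than redo it. Everything else is a direct concatenation of Observation~\ref{observation 2}, Fact~\ref{dual VC}, Fact~\ref{Ben key}, and Observation~\ref{observation}.
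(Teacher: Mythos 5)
Your overall strategy is the same as the paper's: encode the data as a measurable family of $\{0,1\}$-valued functions over an underlying (type-)space, observe that $DBSC$-convergence of $(\psi(a_i,\bar y):i<\omega)$ makes this family \emph{uniformly} dependent, and then apply Fact~\ref{Ben key} to conclude that the randomized family $({\Bbb P}[\psi(a_i,\bar y)])$ is dependent. Your random-variable presentation of $\Omega$ versus the paper's type-space presentation is an inessential difference, and your handling of the approximation of arbitrary $\mathbf b\in{\Bbb U}$ by deferring to \cite[Thm.~5.3]{Ben-VC} matches the paper.

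The genuine gap is in your last step. You rightly notice that dependence only gives a Baire-$1/2$-convergent \emph{subsequence} via Observation~\ref{observation}(ii)$\Rightarrow$(iii), but your proposed repair --- ``a pointwise-convergent dependent sequence is automatically Baire-$1/2$-convergent'' --- is false. For a counterexample, for each $N$ and each $T\subseteq\{1,\ldots,N\}$ fix a window $w^{N,T}_1<\cdots<w^{N,T}_N$ in $\omega$, all windows pairwise disjoint, and a parameter $b_{N,T}$ with $f_i(b_{N,T})=1$ iff $i=w^{N,T}_j$ for some $j\in T$. Each $f_i(b_{N,T})$ is eventually $0$, so the sequence converges pointwise; since the supports are pairwise disjoint no two indices are shattered, so the family is dependent with $N_\epsilon=2$; yet for every $N$ and every $E\subseteq\{1,\ldots,N\}$ the pattern ``$0$ on $E$, $1$ off $E$'' is realized on the window $w^{N,T}$ with $T=\{1,\ldots,N\}\setminus E$, so no single forbidden pattern exists and the sequence is not Baire-$1/2$-convergent. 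The robust repair uses the quantitative content of $DBSC$-convergence rather than mere dependence plus convergence: by Lemma~2.8 of \cite{K-Baire} (exactly as in the proof of Fact~\ref{generic-type}) there is $m$ with $\sum_n|\psi(a_n,b)-\psi(a_{n+1},b)|\leq m$ for all $b$, hence for all $q\in S_{\bar y}({\cal U})$; integrating against the measure representing $tp(\mathbf b)$ gives $\sum_n\big|{\Bbb P}[\psi(a_n,\mathbf b)]-{\Bbb P}[\psi(a_{n+1},\mathbf b)]\big|\leq m$ uniformly in $\mathbf b\in{\Bbb U}$, and a uniform variation bound forbids the alternating pattern of any length $N$ with $(N-1)(s-r)>m$, which is exactly Baire-$1/2$-convergence. (In fairness, the paper's own proof is also terse here, passing from ``dependent'' to ``Baire-$1/2$-converges'' with only a pointer to the Observations; but the burden of justifying that passage is the same, and your cited principle does not discharge it.)
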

\begin{proof} The proof is an adaptation of the argument of \cite[Thm. 5.3]{Ben-VC}. We write $\varphi(\bar x,\bar y)={\Bbb P}[\psi(\bar x,\bar y)]$.
 We will show that $\varphi^{\Bbb U}$ is dependent on
$\{a_i:i<\omega\}\times {\Bbb U}^m$. 
Let us enumerate
 ${\Bbb U}^m = \{\bar {\bf b}_j\colon j \in J\}$, and set $A=\{a_i:i<\omega\}$.
Let ${\bf p} = tp(A , {\Bbb U}^m/\emptyset)$.
We may write it as ${\bf p}( \bar x_i, \bar y_j )_{i\in \omega,j \in J} \in S_{\omega \cup J}(T^R)$,
and identify it with a probability measure
$\mu$ on $\Omega = S_{(\omega\times n) \cup (J\times m)}(T)$ such that for every formula $\rho(\bar z)$
of the theory $T$, $\bar z \subseteq \{\bar x_i,\bar y_j\}_{i\in \omega,j\in J}$:
\begin{gather*}
{\Bbb P}[\rho(\bar z)]^{\bf p} =\mu\big(\{q\in\Omega: \rho(\bar z)\in q\}\big)= \int_\Omega \rho(\bar z)^q\, d\mu(q).
\end{gather*}
We can replace $\Omega$ by $\Omega':=\{q\in\Omega:  (c_i)\models tp(A/\emptyset) \text{ for any } (c_i)_{i<\omega}\cup(d_j)_{j\in J}\models q\}$, and assume that $\mu$ is concentrated on $\Omega'$.
In fact, we can assume that ${\Bbb P}[\rho(\bar z)]^{\bf p} =  \int_{S_{(J\times m)}(T)} \rho(\bar a_i, \bar y_j)^{q_j}\, d\mu(q_j)=\int_{\Omega'} \rho(\bar x_i, \bar y_j)^q\, d\mu(q).$

For $i \in \Bbb N$, $j \in J$ and $q \in \Omega'$ define:
$\chi_q(i,j) = \psi(\bar x_i,\bar y_j)^q$.
Then $\chi = \{\chi_q\colon q \in \Omega'\}$ is a measurable family of
$\{0,1\}$-valued functions on ${\Bbb N} \times J$
and $\varphi(\bar a_i,\bar {\bf b}_j) = \varphi(\bar x_i,\bar y_j)^{\bf p} ={\Bbb P}[\psi(\bar x_i, \bar y_j)]= \Bbb P[\chi](i,j)$
where expectation is
with respect to $\mu$. (Notice that, as $\psi(\bar x,\bar y)$ is a formula, for  $(i,j)\in {\Bbb N}\times J$, the functions  $q\mapsto \chi_q(i,j)$ are measurable.)
Then, by Observations ~\ref{observation} and \ref{observation 2} and the assumption of $DBSC$-convergence,   the family $\{\chi_q\colon q \in \Omega'\}$ is uniformly
dependent. 
  That is, for any $\epsilon>0$, there is $d=d_{\varphi,\epsilon}$ such that $VC((\chi_q)^J)_{[r,r+\epsilon]}\leq d$ for all $r\in[0,1-\epsilon]$ and $q\in\Omega'$.
Therefore, by Fact~\ref{Ben key}, $\Bbb P[\chi] \colon {\Bbb N}\times J \to [0,1]$ is dependent. By Observation~\ref{observation}, this means that $\{\varphi(a_i,y):{\Bbb U}^m\to[0,1]~|i\in {\Bbb N}\}$ is dependent.
Therefore, by Observation~\ref{observation}(iii), there is a subsequence $({\Bbb P}[\psi(a_{i_j},\bar y)]:j<\omega)$ which is Baire-1/2-convergent. 
\end{proof}

Since $fim$ and generic stability are equivalent for types, the direction (i) to (ii) of the following result is a case of \cite[Corollary 3.19]{CGH}. However,  our method is completely different and enlightening in some ways.
\begin{Corollary} \label{type-fim=fim}
	Let   $p(x)$ be a global type (in $T$) which is definable. Then the following are equivalent:
	\newline
	(i) $p$ is   generically stable.
	\newline
	(ii) There is a sequence $(a_i)\in\cal U$ such that  $(tp(a_i/{\Bbb U}):i<\omega)$   Baire-1/2-converges to $p\upharpoonright^{\Bbb U}$. (Therefore, $p\upharpoonright^{\Bbb U}$  is generically stable.)
\end{Corollary}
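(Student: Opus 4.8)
The plan is to derive both directions from the characterization of generic stability in classical logic (\cite[Thm.~4.4]{K-Morley}, used already in Fact~\ref{generic-type}) together with the transfer result Theorem~\ref{Ben local} and the continuous characterization Theorem~\ref{generic type continuous}. For (i)~$\Longrightarrow$~(ii): since $p$ is generically stable and definable, by \cite[Thm.~4.4]{K-Morley} there is a sequence $(a_i)\in{\cal U}$ such that $(tp(a_i/{\cal U}):i<\omega)$ $DBSC$-converges to $p$. Fix a formula $\psi(x,y)$; then $(\psi(a_i,y):i<\omega)$ $DBSC$-converges, so by Theorem~\ref{Ben local} the sequence $({\Bbb P}[\psi(a_i,y)]:i<\omega)$ Baire-$1/2$-converges, and its limit is exactly $(p\upharpoonright^{\Bbb U})({\Bbb P}[\psi(x,y)])$ because $p\upharpoonright^{\Bbb U}$ is defined by integrating the defining function $f_p^\psi$ against $tp({\bf b}/M)$ and $DBSC$-convergence of $\psi(a_i,y)$ forces $f_{tp(a_i/{\cal U})}^\psi\to f_p^\psi$ pointwise, hence (dominated convergence) the integrals converge. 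Running this over all $\psi$ shows $(tp({\bf a}_i/{\Bbb U}):i<\omega)$ Baire-$1/2$-converges to $p\upharpoonright^{\Bbb U}$, where ${\bf a}_i$ is $a_i$ viewed in $\Bbb U$ (i.e. the constant random element). Since $p\upharpoonright^{\Bbb U}$ is definable over $M$ (Fact~\ref{Ben-fact}(ii)), Theorem~\ref{generic type continuous}(ii)~$\Rightarrow$~(i) gives that $p\upharpoonright^{\Bbb U}$ is generically stable.

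For (ii)~$\Longrightarrow$~(i): suppose $(tp({\bf a}_i/{\Bbb U}):i<\omega)$ Baire-$1/2$-converges to $p\upharpoonright^{\Bbb U}$, with ${\bf a}_i\in{\cal U}$ (genuine elements, not proper random elements). Restricting to the classical sublanguage — i.e. evaluating only at formulas ${\Bbb P}[\psi(x,b)]$ with $b\in{\cal U}$, for which ${\Bbb P}[\psi({\bf a}_i,b)]=\psi(a_i,b)$ since ${\bf a}_i$ is an honest element — Baire-$1/2$-convergence of the random types specializes to Baire-$1/2$-convergence of $(tp(a_i/{\cal U}):i<\omega)$ to $p$ (here Baire-$1/2$ and $DBSC$ coincide for $\{0,1\}$-valued types, as noted in Fact~\ref{generic-type}). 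Together with definability of $p$, \cite[Thm.~4.4]{K-Morley} yields that $p$ is generically stable.

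The main obstacle I expect is twofold. First, pinning down that the limit of $({\Bbb P}[\psi(a_i,y)]:i<\omega)$ is genuinely $p\upharpoonright^{\Bbb U}$ and not merely some Baire-$1/2$ function: this needs the identification of the limit type, which rests on the uniqueness of defining functions plus dominated convergence, and one must check the convergence $f_{tp(a_i/{\cal U})}^\psi \to f_p^\psi$ is legitimate (it follows from $DBSC$-convergence, which is in particular pointwise convergence on $S_y({\cal U})$, and one restricts to $S_y(M)$ where the defining function lives). Second, in (ii)~$\Longrightarrow$~(i) one must be careful that the hypothesis really does provide a sequence of \emph{classical} elements of ${\cal U}$ (as the statement asserts), so that the ``specialization to the classical part'' step is valid; if instead one only had a sequence of arbitrary random elements this step would fail, which is precisely the subtlety flagged in Remark~\ref{warning}. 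The point worth isolating separately (cf. the paper's promised \ref{Q}) is that for \emph{types} — unlike measures — the approximating sequence can be taken inside ${\cal U}$ itself rather than inside a model of the form $M\otimes\mathcal{A}$, which is what makes the classical-part restriction available and the argument clean.
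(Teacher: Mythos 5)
Your proposal is correct, and for (i)~$\Longrightarrow$~(ii) it follows the paper's own route: extract a $DBSC$-convergent sequence $(a_i)$ via \cite[Thm.~4.4]{K-Morley}, push it through Theorem~\ref{Ben local}, and identify the limit as $p\upharpoonright^{\Bbb U}$ by dominated convergence applied to the defining functions. The one step you gloss over is that Theorem~\ref{Ben local} only yields Baire-$1/2$-convergence for $L^R$-formulas of the form ${\Bbb P}[\psi(x,y)]$, whereas Baire-$1/2$-convergence of $(tp({\bf a}_i/{\Bbb U}):i<\omega)$ quantifies over \emph{all} $L^R$-formulas; the paper closes this by applying Theorem~\ref{Ben local} to atomic $\psi$ and then invoking Lemma~3.6 and Lemma~2.16 of \cite{Ben-VC} (closure of the relevant dependence/$VC$ bounds under quantifiers and continuous connectives) together with quantifier elimination for $T^R$. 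This is routine given the cited tools, but it should be stated. For (ii)~$\Longrightarrow$~(i) you genuinely diverge from the paper: you restrict the Baire-$1/2$-convergence to classical parameters $b\in{\cal U}$, use that ${\Bbb P}[\psi(a_i,b)]=\psi(a_i,b)$ and that $tp(b/M)$ is a Dirac measure so ${\Bbb P}[\psi(x,b)]^{p\upharpoonright^{\Bbb U}}=p(\psi(x,b))$, and conclude directly from \cite[Thm.~4.4]{K-Morley}. The paper instead restricts to ${\cal U}\otimes{\cal A}$, applies Theorem~\ref{main} to deduce that $(tp(a_i/{\cal U}):i<\omega)$ \emph{randomly} converges to $p$, and finishes with Fact~\ref{generic-type}. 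Your route is shorter and more elementary; the paper's reuses the established equivalences and records along the way the stronger intermediate fact (random convergence, i.e.\ blocking against convex combinations of parameters). Your closing remark --- that the hypothesis hands you honest elements of ${\cal U}$, which is exactly what legitimizes the restriction to the classical part --- is precisely the subtlety the paper isolates in Question~\ref{Q}, so your argument is sound as stated.
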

\begin{proof}
(i)~$\Longrightarrow$~(ii) follows from Theorem~\ref{Ben local}. Indeed, by Theorem~4.4 of \cite{K-Morley}, there is a sequence $(a_i)\in\cal U$ such that $(tp(a_i/{\cal U}):i<\omega)$ $DBSC$-converges to $p$. By Theorem~\ref{Ben local} above, $(tp(a_i/{\Bbb U}):i<\omega)$ Baire-1/2-converges to $p\upharpoonright^{\Bbb U}$.
Indeed, notice that,  by Theorem~\ref{Ben local}, for every $L$-formula $\phi(x,y)$, the sequence $({\Bbb P}[\phi(a_i,y)]:i<\omega)$ Baire-1/2-converges to ${\Bbb P}[\phi(x,y)]^{p\upharpoonright^{\Bbb U}}$.
By the elimination of quantifiers in $T^R$, the same holds for every formula in $L^R$ with variable $x$. (More precisely, it is easy to see that if $\varphi_1,\varphi_2$ are formulas of the form in ${\Bbb P}[\phi_1]$ and ${\Bbb P}[\phi_2]$, then for every continuous  combination $\psi(x,y)$ of them, the sequence $(\psi(a_i,y):i<\omega)$  Baire-1/2 convergence to $\psi(x,y)^{p\upharpoonright^{\Bbb U}}$.
(For this, one can use  the argument Corollary~3.5 in \cite{Ben-VC}.)
This means that we proved the claim for every atomic formula. 
Furthermore, it is easy to see that the same holds for uniform limit of formulas.\footnote{Recall that in continuous logic, the concept of a formula has been extended, and the uniform limit of formulas is also considered a formula.}
Now, by the elimination of quantifiers, the claim is proved for every formula.)


(ii)~$\Longrightarrow$~(i): As $(tp(a_i/{\Bbb U}):i<\omega)$   Baire-1/2 converges to $p\upharpoonright^{\Bbb U}$, the sequence  $(tp(a_i/{\cal U}):i<\omega)$   Baire-1/2 converges to $p$. (This follows from an argument similar to Proposition~\ref{main}, even simpler.) Notice that, for types,   $DBSC$-convergence and Baire-1/2 convergence coincide. So, by Theorem~4.4 of \cite{K-Morley}, $p$ is generically stable.
\end{proof}

\begin{Remark} (i): In the previous version of the present paper we asked  whether the generic stability of $p\upharpoonright^{\Bbb U}$ implies the condition (ii) in Corollary~\ref{type-fim=fim}; equivalently, if $p\upharpoonright^{\Bbb U}$ is generically stable, then  is $p$ generically stable? In Proposition 3.20 of \cite{CGH-generic}, this question was answered positively. Therefore, a definable type in classical logic is generically stable if and only if its corresponding random-type is generically stable.
\newline
(ii) A question that naturally arises is why not use the method used in this section for measures  instead of just types. The answer is that the randomization of average measures is not necessarily realized in $\Bbb U$, something that does not happen for types, i.e. if $a$ realizes $p$, then $a\otimes 1$ realizes $p\upharpoonright^{\Bbb U}$.
\newline
(iii) The final point is that the result of this section also holds for continuous theories, meaning we could have assumed that $T$ is a countable separable theory.
\end{Remark}


\bigskip\noindent
{\bf Acknowledgements.}  I want to thank James Hanson   for his helpful comments (especially because of the point on non-saturation of models of the form $M\otimes\mathcal{A}$).

I would like to thank the Institute for Basic Sciences (IPM), Tehran, Iran. Research partially supported by IPM grant~1401030117.

\end{document}